\numberwithin{equation}{section}
\makeatletter \@namedef{subjclassname@2020}{\textup{2020} Mathematics Subject Classification}
\newtheorem{theorem}{Theorem}[section]
\newtheorem{lemma}[theorem]{Lemma}
\newtheorem{corollary}[theorem]{Corollary}
\theoremstyle{definition}
\newtheorem{definition}[theorem]{Definition}
\theoremstyle{remark}
\newtheorem{remark}[theorem]{Remark}
\newcommand{\hooklongrightarrow}{\lhook\joinrel\longrightarrow}
\newcommand{\Ass}{\operatorname{Ass}}
\newcommand{\im}{\operatorname{im}}
\newcommand{\Assh}{\operatorname{Assh}}
\newcommand{\Spec}{\operatorname{Spec}}
\newcommand{\rank}{\operatorname{rank}}
\newcommand{\id}{\operatorname{id}}
\newcommand{\pd}{\operatorname{pd}}
\newcommand{\Ext}{\operatorname{Ext}}
\newcommand{\Supp}{\operatorname{Supp}}
\newcommand{\Tor}{\operatorname{Tor}}
\newcommand{\Hom}{\operatorname{Hom}}
\newcommand{\Att}{\operatorname{Att}}
\newcommand{\Ann}{\operatorname{Ann}}
\newcommand{\amp}{\operatorname{amp}}
\newcommand{\cmd}{\operatorname{cmd}}
\newcommand{\depth}{\operatorname{depth}}
\newcommand{\lo}{\longrightarrow}
\newcommand{\fm}{\mathfrak{m}}
\newcommand{\fp}{\mathfrak{p}}
\newcommand{\fq}{\mathfrak{q}}
\DeclareMathSymbol{\perp}{\mathrel}{symbols}{"3F}
\begin{document}

\author[K. Abolfath Beigi, K. Divaani-Aazar and M. Tousi]{Kosar Abolfath Beigi, Kamran Divaani-Aazar
and Massoud Tousi}

\title[Characterizing certain semidualizing ...]{Characterizing certain semidualizing complexes via their
Betti and Bass numbers}

\address{K. Abolfath Beigi, Department of Mathematics, Faculty of Mathematical Sciences, Alzahra
University, Tehran, Iran.}
\email{kosarabolfath@gmail.com}

\address{K. Divaani-Aazar, Department of Mathematics, Faculty of Mathematical Sciences, Alzahra
University, Tehran, Iran-and-School of Mathematics, Institute for Research in Fundamental Sciences (IPM),
P.O. Box 19395-5746, Tehran, Iran.}
\email{kdivaani@ipm.ir}

\address{M. Tousi, Department of Mathematics, Faculty of Mathematical Sciences, Shahid Beheshti University,
Tehran, Iran.}
\email{mtousi@ipm.ir}

\subjclass[2020]{13C14; 13D02; 13D09}

\keywords{Amplitude of a complex; Bass number of a complex; Betti number of a complex; Cohen-Macaulay complex;
dualizing complex; semidualizing complex; type of a module}

\begin{abstract} It is known that the numerical invariants Betti numbers and Bass numbers are worthwhile tools
for decoding a large amount of information about modules over commutative rings. We highlight this fact, further,
by establishing some criteria for certain semidualizing complexes via their Betti and Bass numbers.
Two distinguished types of semidualizing complexes are the shifts of the underlying rings and dualizing complexes.
Let $C$ be a semidualizing complex for an analytically irreducible local ring $R$ and set $n:=\sup C$ and
$d:=\dim_RC$. We show that $C$ is quasi-isomorphic to a shift of $R$ if and only if the $n$th Betti number of $C$
is one. Also, we show that $C$ is a dualizing complex for $R$ if and only if the $d$th Bass number of $C$ is one.
\end{abstract}

\maketitle

\section{Introduction}

Grothendieck and Hartshorne introduced dualizing complexes; see \cite{H}. Dualizing complexes and the rich duality
theory induced by them were appeared in many contexts in algebraic geometry and commutative algebra. Semidualizing
modules are also proven to be a venerable tool in commutative algebra. To unify the study of dualizing complexes and
semidualizing modules, Christensen \cite{C1} introduced the notion of semidualizing complexes. Immediate examples of
semidualizing complexes are the shifts of the underlying rings and dualizing complexes. These are precisely semidualizing
complexes with finite projective dimension and semidualizing complexes with finite injective dimension. It is known
that a homologically bounded complex $X$ with finitely generated homology modules is a dualizing complex for a local
ring $R$ if and only if there exists an integer $d$ such that the $d$th Bass number of $X$ is one and the other Bass
numbers of $X$ are zero; see \cite[Chapter V, Proposition 3.4]{H}. We establish some criteria for semidualizing complexes
that are either the shifts of the underlying ring or dualizing, via their Betti and Bass numbers.

Throughout, $(R,\fm,k)$ is a commutative Noetherian local ring with non-zero identity. Since the time Hilbert proved his
celebrated Syzygy theorem, the importance of free resolutions becomes clear. The numerical invariants Betti numbers of a
finitely generated $R$-module $M$ are defined by the minimal free resolution of $M$. By definition, the $i$th Betti number
of $M$, denoted $\beta^R_i(M)$, is the minimal number of generators of the $i$th syzygy of $M$ in its minimal free resolution.
By knowing the Betti numbers of an $R$-module, we can decode a lot of valuable information about it.

The $i$th Bass number of an $R$-module $M$, denoted $\mu^i_R(M)$, is defined by using the minimal injective resolution of
$M$. Similar to the Betti numbers, the Bass numbers also reveal a lot of information about the module. The type of an $R$-module
$M$ is defined to be $\mu^{\dim_RM}_R(M)$. It is known that local rings with small type have nice properties; see e.g.
\cite{B, V, F1, R, CHM, K, Ao, Le1, Le2, CSV}. By the work of Bass \cite{B}, every Cohen-Macaulay local ring of type one is
Gorenstein. (In \cite{CSV}, this is improved to the statement that if $R$ is a Cohen-Macaulay local ring and $\mu^n_R(R)=1$
for some $n\geq 0$, then $R$ is Gorenstein of dimension $n$.) Vasconcelos \cite[4, p. 53]{V} conjectured that every local ring
of type one is Gorenstein. Foxby \cite{F1} solved this conjecture for equicharacteristic local rings. The general case was
answered by Roberts \cite{R}.

In this article, we prove that if $X$ is a Cohen-Macaulay complex with $\beta^R_{\sup X}(X)=1$, then $X\simeq
\Sigma^{\sup X} R/\Ann_RX$; see Theorem \ref{29}(a). As an application, we show that if $M$ is a finitely generated
$R$-module of type one, then $M$ is the dualizing module for the ring $R/\text{Ann}_RM$; see Corollary \ref{210}.
This gives an affirmative answer to a conjecture raised by Foxby \cite[Conjecture B]{F1}, which is already proved;
see e.g. \cite[Corollary 2.7]{Le1}. We may improve Theorem \ref{29}(a), if we consider a semidualizing complex $C$
with $\beta^R_{\sup C}(C)\leq 2$. We do this in Theorems \ref{29}(b), \ref{212} and \ref{216}. These results
assert that:

\begin{theorem}\label{11} Let $(R,\fm,k)$ be a local ring and $C$ a semidualizing complex for $R$.
\begin{enumerate}
\item[(a)] If $\beta^R_{\sup C}(C)=1$ and $\Ass R$ is a singleton, then $C\simeq \Sigma^{\sup C}R$.
\item[(b)] If $\beta^R_{\sup C}(C)=2$ and $R$ is a domain, then $\amp C=0$.
\item[(c)] If $\beta^R_{n}(C)=1$ for some integer $n$ and either $R$ is Cohen-Macaulay, or $C$ is a module with a rank,
then $C\simeq \Sigma^nR$ and $n=\sup C$.
\end{enumerate}
\end{theorem}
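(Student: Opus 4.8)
The plan is to reduce every part to the single statement that the hypotheses force $\amp C=0$, after which the classification of amplitude-zero semidualizing complexes finishes the argument. Set $n:=\sup C$. I will freely use the standard facts that a semidualizing complex has full support with $\cmd_RC=\cmd R$; that $\amp C=0$ holds precisely when $C\simeq\Sigma^sM$ for a semidualizing \emph{module} $M$ (where $s=\inf C=\sup C$); and that a semidualizing module is faithful, has rank one whenever it has a rank, and is isomorphic to $R$ as soon as it is cyclic or has finite projective dimension. The decisive simplification is that once $\amp C=0$ we have $\beta^R_i(C)=\beta^R_{i-s}(M)$, so $\beta^R_{\sup C}(C)=\beta^R_0(M)=\mu_R(M)$; thus in the amplitude-zero world the top Betti number is simply the minimal number of generators of $M$, and every subtlety migrates into the proof that the amplitude vanishes.

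With this in hand, part (c) splits along its two hypotheses. If $R$ is Cohen-Macaulay then $\cmd_RC=\cmd R=0$, so $\amp C=0$ automatically and $C\simeq\Sigma^sM$ with $M$ a maximal Cohen-Macaulay (hence Cohen-Macaulay) semidualizing module and $s=\sup C$. The equation $\beta^R_n(C)=\beta^R_{n-s}(M)=1$ is vacuous for $n<s$, yields $\mu_R(M)=1$ for $n=s$ (whence $M\cong R$ by faithfulness, or directly by Theorem~\ref{29}(a)), and for $n>s$ forces a higher syzygy of $M$ to be cyclic. I would kill this last case by localizing at the minimal primes, where $R$ is Artinian, to see that a cyclic syzygy of the faithful module $M$ is free, so $\pd_RM<\infty$ and again $M\cong R$; in all cases $C\simeq\Sigma^sR$ and the only admissible index is $n=s=\sup C$. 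If instead $C$ is a module with a rank, the rank is one, so any cyclic syzygy $\Omega^nC$ has rank one and, being torsionless, embeds in a free module; a rank-one cyclic submodule of a free module is free, giving the same conclusion.

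Part (a) is the case where this machinery must be extended beyond the Cohen-Macaulay hypothesis, since $\Ass R$ being a singleton does not make $R$ Cohen-Macaulay. Here I would localize at the unique associated prime $\fp$, which is then the unique minimal prime, so $R_\fp$ is Artinian and $C_\fp$ is a semidualizing complex of amplitude zero over it. This pins down the generic shape of $C$ and, together with $\beta^R_{\sup C}(C)=1$ and the isomorphism $\RHom_R(C,C)\simeq R$, is what I would use to force $\amp C=0$ globally; the amplitude-zero reduction of the previous paragraph then identifies $M$ as a cyclic, hence faithful, semidualizing module, so $M\cong R$ and $C\simeq\Sigma^nR$.

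Part (b) is where I expect the genuine difficulty. Over a domain $R$ with fraction field $K$, the localization $C_K$ is semidualizing over the field $K$, and the identity $\RHom_K(C_K,C_K)=K$ forces $C_K\simeq\Sigma^cK$ for a single $c$; comparing alternating sums of ranks in the minimal free resolution then gives $\sum_i(-1)^i\beta^R_i(C)=\pm1$. The task is to show that positive amplitude is incompatible with $\beta^R_{\sup C}(C)=2$, and the hard part will be producing a clean inequality bounding $\amp C$ in terms of the top Betti number. My intended route is to analyse the top two free modules of the minimal resolution against the self-duality $\RHom_R(C,C)\simeq R$, and, if a direct count resists, to pass to the completion (so that a dualizing complex $D$ exists), replace $C$ by its dual $C^{\dagger}=\RHom_R(C,D)$, and convert the top-Betti constraint into a Bass-number statement handled by the companion results; reading the amplitude off this two-term constraint should then give $\amp C=0$.
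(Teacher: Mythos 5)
Your reduction skeleton (force $\amp C=0$, then classify semidualizing modules with small Betti numbers) is indeed the paper's strategy, and your treatment of the module-with-a-rank case of (c) is essentially the paper's Lemma \ref{2151}: syzygies inherit a rank, a non-zero torsion-free module with a rank has positive rank, and a cyclic module with a positive rank must be free, so $\pd_RC<\infty$ and $C\cong R$. But in all three places where the real work happens, your proposal either asserts the conclusion or rests on a false statement. The worst case is part (c) for $R$ Cohen-Macaulay: after reducing to an Artinian ring you claim that ``a cyclic syzygy of the faithful module $M$ is free.'' This is false: over $R=k[x]/(x^2)$ the faithful module $R\oplus k$ has first syzygy $(x)\cong k$, which is cyclic and not free. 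Faithfulness does not pass to syzygies, and your argument never uses the semidualizing property at that point; moreover, localizing at minimal primes both destroys minimality of the resolution and discards exactly the relevant information (a dualizing module over a non-Gorenstein Cohen-Macaulay domain is free at the generic point yet not free). The paper's Theorem \ref{216} handles this case by a genuinely different device: it dualizes the syzygy sequences by $(-)^*=\Hom_R(-,C)$, shows via Bass class arguments that the modules $(\Omega^iC)^*$ are non-zero, and plays the two length formulas $\ell_R(C)=h\ell_R(R)+(-1)^{n+1}\ell_R(\Omega^{n+1}C)$ and $\ell_R(R)=h\ell_R(C)+(-1)^{n+1}\ell_R((\Omega^{n+1}C)^*)$ against each other to get a contradiction for every parity of $n$ and every value of $h$. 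No soft freeness argument is known to replace this.

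Parts (a) and (b) have the same defect in milder form: the decisive steps are announced, not proved. For (a), localizing at the unique associated prime only controls generic behaviour; the implication ``$\beta^R_{\sup C}(C)=1$ and $\Ass R$ a singleton imply $\amp C=0$'' is exactly where the paper invokes its sharpened New Intersection theorem (Lemmas \ref{21} and \ref{22}): after normalizing so that $\sup C=\dim R$, Lemma \ref{28} (this is where the singleton hypothesis enters) gives $\dim_RC=0$ and $\dim_R(\text{H}_{\sup C}(C))=\sup C$, and then positive amplitude would force $\beta^R_{\sup C}(C)=\gamma_{\sup C}(F)+\gamma_{\sup C-1}(F)\geq 2$ for the minimal free resolution $F$ (Lemma \ref{27}). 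The obstruction is dimension-theoretic over all of $\Spec R$, so no localization at one prime can substitute for it. For (b) you explicitly defer (``if a direct count resists \dots should then give''); the paper's actual mechanism is to observe that $F$ is indecomposable in $\mathcal{D}(R)$ because $R\simeq{\bf R}\Hom_R(C,C)$ is, to prove $\partial^F_{\sup C}=0$ by a rank count (torsion-freeness of $\text{H}_{\sup C}(C)$, from Lemma \ref{28}(a), gives $\rank_R(\ker\partial^F_{\sup C})\geq 2=\rank_RF_{\sup C}$), and then to split off the tail of $F$ by Lemma \ref{211}. In sum, the proposal is a plan whose three load-bearing steps are missing, and the one explicit argument offered for the hardest of them is incorrect.
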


Dually, concerning semidualizing complexes with small type, in Corollaries \ref{213} and \ref{217}, we prove
that:

\begin{theorem}\label{12} Let $(R,\fm,k)$ be a local ring and $C$ a semidualizing complex for $R$.
\begin{enumerate}
\item[(a)] If $C$ is a module of type one, then $C$ is dualizing.
\item[(b)] If $C$ has type one, then $C$ is dualizing provided either $\Ass R$ is a singleton and $R$ admits a dualizing
complex, or $\Ass \widehat{R}$ is a singleton.
\item[(c)] If $C$ has type two and $R$ is a domain admitting a dualizing complex, then $C$ is Cohen-Macaulay.
\item[(d)] If $\mu_R^{n}(C)=1$ for some integer $n$, $C$ is Cohen-Macaulay and $R$ is either Cohen-Macaulay or analytically
irreducible, then $C$ is dualizing and $n=\dim_RC$.
\end{enumerate}
\end{theorem}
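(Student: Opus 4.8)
The unifying idea is \emph{dagger duality}, and the plan is to transfer each hypothesis on the Bass numbers of $C$ into a hypothesis on the Betti numbers of a dual semidualizing complex, to which Theorem~\ref{11} applies. First I would arrange that $R$ admits a dualizing complex $D$, completing when necessary: completion changes neither the Bass numbers ($\mu^i_R(C)=\mu^i_{\widehat R}(\widehat C)$), nor the Cohen--Macaulayness of $C$, nor its being dualizing (since $\injdim_R C<\infty$ iff $\injdim_{\widehat R}\widehat C<\infty$), while it turns ``$\Ass R$ a singleton'', ``analytically irreducible'', ``domain'' into the corresponding properties of $\widehat R$. Normalizing $D$ so that $\RHom_R(k,D)\simeq k$ and writing $C^{\dagger}:=\RHom_R(C,D)$, the complex $C^{\dagger}$ is again semidualizing with $(C^{\dagger})^{\dagger}\simeq C$; moreover $C$ is dualizing exactly when $C^{\dagger}$ has finite projective dimension, i.e.\ when $C^{\dagger}\simeq\Sigma^{s}R$ for some $s$ (the shifts of $R$ being precisely the semidualizing complexes of finite projective dimension). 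The dictionary I would record is
\[
\mu^{i}_R(C)=\beta^R_{i}(C^{\dagger}),\qquad \dim_RC=\sup C^{\dagger},\qquad \cmd_RC=\amp C^{\dagger},
\]
the first coming from the adjunction $\RHom_R(k,C)\simeq\RHom_R(k\otimes^{\mathbf L}_RC^{\dagger},D)$ together with $\RHom_R(k,D)\simeq k$. In particular the type $\mu^{\dim_RC}_R(C)$ of $C$ equals the top Betti number $\beta^R_{\sup C^{\dagger}}(C^{\dagger})$.

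With this dictionary parts (b) and (c) are immediate. For (b) the hypothesis that $C$ has type one reads $\beta^R_{\sup C^{\dagger}}(C^{\dagger})=1$; applying Theorem~\ref{11}(a) over $R$ (first case) or over $\widehat R$ after completing (second case, using that $\Ass\widehat R$ is a singleton) gives $C^{\dagger}\simeq\Sigma^{\sup C^{\dagger}}R$, whence $C=(C^{\dagger})^{\dagger}$ is a shift of $D$ and so dualizing. For (c), type two reads $\beta^R_{\sup C^{\dagger}}(C^{\dagger})=2$; as $R$ is a domain, Theorem~\ref{11}(b) yields $\amp C^{\dagger}=0$, and then $\cmd_RC=\amp C^{\dagger}=0$ says precisely that $C$ is Cohen--Macaulay.

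For (d) the hypothesis $\mu^n_R(C)=1$ becomes $\beta^R_n(C^{\dagger})=1$, and I would complete so that $\widehat R$ admits $D$. If $R$ is Cohen--Macaulay, Theorem~\ref{11}(c) applies to $C^{\dagger}$ directly. If instead $R$ is analytically irreducible, then $\widehat R$ is a domain, and the assumed Cohen--Macaulayness of $C$ gives $\amp C^{\dagger}=\cmd_RC=0$, so that $C^{\dagger}$ is, up to a shift, a finitely generated module over a domain and therefore has a rank; again the hypotheses of Theorem~\ref{11}(c) are met. In either case $C^{\dagger}\simeq\Sigma^nR$ with $n=\sup C^{\dagger}$, so $C$ is dualizing and $n=\sup C^{\dagger}=\dim_RC$. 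Part (a), by contrast, lies outside this scheme since no dualizing complex is posited; here I would argue directly that a semidualizing module is faithful ($\Ann_RC=0$, as the isomorphism $R\cong\Hom_R(C,C)$ carries $1$ to $\id_C$), and then invoke the resolved form of Foxby's Conjecture~B \cite[Corollary 2.7]{Le1}, by which a finitely generated module of type one is the dualizing module of $R/\Ann_R(-)$; applied to $C$ with $\Ann_RC=0$ this makes $C$ the dualizing module of $R$, hence dualizing.

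I expect the main obstacle to be the precise bookkeeping of the duality dictionary: fixing the normalization of $D$ so that $\mu^i_R(C)=\beta^R_i(C^{\dagger})$, $\dim_RC=\sup C^{\dagger}$, and $\cmd_RC=\amp C^{\dagger}$ hold on the nose, and then verifying that each relevant property (type, Cohen--Macaulayness, being dualizing, and the ambient hypotheses ``$\Ass$ a singleton''/``domain'') is faithfully transported both along completion and along $(-)^{\dagger}$.
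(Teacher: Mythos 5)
Your proposal is correct and takes essentially the same route as the paper: dagger duality (Remark \ref{24}) converts each Bass-number hypothesis on $C$ into a Betti-number hypothesis on $C^{\dag}$, to which the paper applies Theorems \ref{29}(b), \ref{212} and \ref{216} exactly as you apply Theorem \ref{11}(a)--(c), and your part (a) rests on the same fact the paper proves internally as Corollary \ref{210} (a finitely generated module of type one is dualizing for $R/\Ann_R(-)$, combined with $\Ann_RC=0$), which you instead cite from the literature. One caution: your preliminary claim that completion turns ``$\Ass R$ a singleton'' and ``domain'' into the corresponding properties of $\widehat{R}$ is false in general, but your case-by-case arguments never actually rely on it---in precisely those cases you work over $R$ itself with its hypothesized dualizing complex---so nothing breaks.
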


Each of parts (a) and (b) extends the above-mentioned result of Roberts. Also, part (c) extends the main result of \cite{CHM}.
Finally, part (d) generalizes \cite[Characterization 1.6]{CSV}.

It is known that every dualizing complex has type one. Also, it is easy to check that if a complex $C$ is quasi-isomorphic to a
shift of $R$, then $\beta^R_{\sup C}(C)=1$. Thus, we immediately exploit Theorem \ref{11}(a) and Theorem \ref{12}(b) to obtain the
following corollary:

\begin{corollary}\label{13} Let $(R,\fm,k)$ be an analytically irreducible local ring and $C$ a semidualizing complex for $R$. Then
\begin{enumerate}
\item[(a)] $C$ is quasi-isomorphic to a shift of $R$ if and only if $\beta^R_{\sup C}(C)=1$.
\item[(b)] $C$ is a dualizing complex for $R$ if and only if $\mu^{\dim_RC}_R(C)=1$.
\end{enumerate}
\end{corollary}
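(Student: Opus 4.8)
The plan is to derive Corollary~\ref{13} directly from the two main theorems, so the work is conceptual rather than computational. The corollary is stated for an analytically irreducible local ring $R$, which by definition means that the $\fm$-adic completion $\widehat{R}$ is a domain; in particular $\Ass\widehat{R}$ is a singleton and, since $R$ embeds in $\widehat{R}$, the ring $R$ itself is a domain and $\Ass R$ is a singleton. This is exactly the hypothesis that feeds into both Theorem~\ref{11}(a) and Theorem~\ref{12}(b), so the analytically irreducible assumption is tailored to make both directions available at once.

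For part (a), the forward implication is the elementary observation already flagged in the excerpt: if $C\simeq \Sigma^s R$ for some integer $s$, then $\sup C=s$ and the minimal free resolution of $C$ in homological degree $s$ is a single copy of $R$, whence $\beta^R_{\sup C}(C)=1$. I would record this as a one-line computation. The reverse implication is the substantive direction, and it is precisely Theorem~\ref{11}(a): since $R$ is analytically irreducible it is a fortiori a ring with $\Ass R$ a singleton, so $\beta^R_{\sup C}(C)=1$ forces $C\simeq \Sigma^{\sup C}R$, which is a shift of $R$. Thus part (a) is an immediate packaging of the forward computation together with Theorem~\ref{11}(a).

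For part (b), the forward implication again is the easy direction: every dualizing complex has type one, i.e.\ $\mu^{\dim_R C}_R(C)=1$, a standard fact I would cite from \cite{H}. The reverse implication invokes Theorem~\ref{12}(b): because $R$ is analytically irreducible we have that $\Ass\widehat{R}$ is a singleton, which is one of the two sufficient conditions listed there, so the hypothesis that $C$ has type one yields that $C$ is dualizing. I would note that we do not even need the alternative clause about $R$ admitting a dualizing complex, since the completion hypothesis suffices on its own.

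The only point requiring genuine care—and the step I would treat as the main obstacle—is the verification that ``analytically irreducible'' correctly specializes the hypotheses ``$\Ass R$ is a singleton'' and ``$\Ass\widehat{R}$ is a singleton'' used in the two theorems, together with confirming that $\dim_R C$ and $\sup C$ are the indices at which the relevant Bass and Betti numbers are being read. Once the bookkeeping of these indices is checked against the definitions of type ($\mu^{\dim_R C}_R(C)$) and of the top Betti number, the corollary follows formally, so I would keep this verification short and let the two theorems do the real work.
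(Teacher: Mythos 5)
Your proposal is correct and follows essentially the same route as the paper: the easy directions (a shift of $R$ has top Betti number one; a dualizing complex has type one) combined with Theorem \ref{11}(a) for part (a) and Theorem \ref{12}(b) for part (b), using that analytic irreducibility makes $\widehat{R}$ (hence $R$) a domain so that both $\Ass R$ and $\Ass\widehat{R}$ are singletons. Your explicit verification of these hypothesis specializations is exactly the bookkeeping the paper leaves implicit.
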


\section{The Results}

In what follows, $\mathcal{D}(R)$ denotes the derived category of the category of $R$-modules. Also, $\mathcal{D}^f_{\Box}(R)$
denotes the full subcategory of homologically bounded complexes with finitely generated homology modules. The isomorphisms
in $\mathcal{D}(R)$ are marked by the symbol $\simeq$. For a complex $X\in \mathcal{D}(R)$, its supremum and infimum are defined,
respectively, by $\sup X:=\sup \{i\in \mathbb{Z}\mid \text{H}_i(X)\neq 0\}$ and $\inf X:=\inf \{i\in \mathbb{Z}\mid
\text{H}_i(X) \neq 0\}$, with the usual convention that $\sup \emptyset=-\infty$ and $\inf \emptyset=\infty$. Also, amplitude of
$X$ is defined by $\amp X:=\sup X-\inf X$. In what follows, we will use, frequently, the endofunctors $-\otimes_R^{{\bf L}}-$,
${\bf R}\Hom_R(-,-)$ and $\Sigma^n(-), n\in\mathbb{Z},$ on the category $\mathcal{D}(R)$. For the definitions and basic properties of
these functors, we refer the reader to \cite[Appendix]{C2}.

Theorem \ref{29} is the first main result of this paper. To prove it, we need Lemmas \ref{25}, \ref{27} and \ref{28}. For proving
Lemma \ref{27}, we require to establish Lemmas \ref{21}, \ref{22} and \ref{26}. Lemma \ref{21} improves, slightly,  the celebrated
New Intersection theorem. Our proofs of Lemmas \ref{21}, \ref{22} are slight modifications of the proofs of \cite[Theorem 1.2]{F1}
and \cite[Lemma 3.2]{F1}. Also, Remark \ref{24} serves as a powerful tool throughout the paper.

\begin{lemma}\label{21} Let $(R,\fm,k)$ be a local ring. Let $$F= 0\lo F_s\lo F_{s-1}\lo \cdots \lo F_0\lo 0$$ be a non-exact
complex of finitely generated free $R$-modules and $t$ be a non-negative integer such that $\dim_R(\text{H}_i(F))\leq i+t$ for
all $0\leq i\leq s$. Then $\dim R\leq s+t$.
\end{lemma}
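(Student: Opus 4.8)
The plan is to reduce the statement to the New Intersection Theorem, carrying out an induction on $t$ by generic hyperplane sections. Before starting, I would normalize $F$ by replacing it with its \emph{minimal} version: this leaves every $\text{H}_i(F)$ and the interval $[0,s]$ unchanged, and it makes non-exactness robust under reduction. Indeed, a nonzero minimal complex of free modules is never exact, and if all differentials of $F$ have entries in $\fm$ then so do those of $F\otimes_R R/xR=F/xF$ for any $x\in\fm$; hence, once $F$ is minimal and nonzero, every reduction $F/xF$ formed below is again minimal, nonzero, and therefore non-exact over $R/xR$. I may also assume $\dim R\geq 1$, the case $\dim R=0$ being trivial since $s,t\geq 0$.

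For the inductive step I would treat $t\geq 1$. Using prime avoidance, choose $x\in\fm$ lying outside (i) the top-dimensional minimal primes of $R$ and (ii) for each $i$ with $\dim_R\text{H}_i(F)\geq 1$, the minimal primes $\fp$ of $\Supp_R\text{H}_i(F)$ with $\dim R/\fp=\dim_R\text{H}_i(F)$; all of these are non-maximal because $\dim R\geq 1$, while the modules $\text{H}_i(F)$ of dimension $0$ impose no condition on $x$, so such an $x$ exists. Then $\dim R/xR=\dim R-1$, and since $F$ is a complex of flat modules the hyper-Tor spectral sequence $\text{E}^2_{p,q}=\Tor^R_p(\text{H}_q(F),R/xR)\Rightarrow \text{H}_{p+q}(F/xF)$, together with $\Supp_R\Tor^R_p(\text{H}_q(F),R/xR)\subseteq \Supp_R\text{H}_q(F)\cap\V(x)$, bounds each subquotient of $\text{H}_n(F/xF)$ in dimension by $\dim_R\text{H}_q(F)-1\leq n+t-1$ when $\dim_R\text{H}_q(F)\geq 1$, and by $0\leq n+t-1$ (where $t\geq 1$ is used) when $\dim_R\text{H}_q(F)=0$. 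Hence $\dim_{R/xR}\text{H}_n(F/xF)\leq n+(t-1)$ for all $n$, and the inductive hypothesis gives $\dim R/xR\leq s+(t-1)$, that is, $\dim R\leq s+t$.

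This leaves the base case $t=0$, where $\dim_R\text{H}_i(F)\leq i$ and the goal is $\dim R\leq s$; this is exactly the New-Intersection-Theorem content and is where I expect the real difficulty. If every $\text{H}_i(F)$ has finite length, the New Intersection Theorem applies verbatim and yields $s\geq\dim R$. When $\inf F\geq 1$ one may again cut by a generic section: the lowest homology stays in degree $\geq 1$, the bound improves to $\dim_{R/xR}\text{H}_n(F/xF)\leq n-1$, and an induction on $s$ closes this case sharply. The hard case is $\inf F=0$ with $\text{H}_0(F)$ a nonzero module of finite length while some higher homology has positive dimension: here a naive hyperplane induction loses a factor, since each cut lowers $\dim R$ as well as the homology dimensions, so forcing the homology to be finite length costs as many cuts as the top homological degree. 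One is therefore forced to invoke the New Intersection Theorem in its sharp form—detecting the full dimension of $R$ from a complex whose homology is of finite length only in positive degrees—rather than reducing to the Artinian case. Pinning down this last case is the main obstacle, and it is precisely the point at which the argument must mirror the proof of \cite[Theorem 1.2]{F1}.
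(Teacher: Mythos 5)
Your inductive step is sound, and it is a genuinely different mechanism from the paper's: for $t\geq 1$, minimalizing $F$, choosing $x\in\fm$ by prime avoidance outside the top-dimensional minimal primes of $R$ and of each $\Supp_R\text{H}_i(F)$ with $\dim_R\text{H}_i(F)\geq 1$, and bounding the homology of $F/xF$ via the filtration with subquotients of $\Tor^R_p(\text{H}_q(F),R/xR)$ does reduce $t$ to $t-1$. The genuine gap is the base case $t=0$, which you leave open --- and that case is not a formality: it is essentially \cite[Theorem 1.2]{F1}, i.e.\ the strengthening of the New Intersection theorem that this lemma exists to prove. Your proposed way out, a ``sharp form'' of the New Intersection theorem for complexes ``whose homology is of finite length only in positive degrees'', is not a citable theorem: with no hypothesis on $\text{H}_0$ such a statement is false (take $F=R$ concentrated in degree $0$), and the Improved New Intersection theorem requires finite-length homology in positive degrees together with a minimal-generator condition on $\text{H}_0$ --- the opposite configuration from your hard case, where $\text{H}_0(F)$ has finite length and some higher $\text{H}_j$ has positive dimension. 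Moreover, no hyperplane argument can close this gap: for any $x\in\fm$, Nakayama gives $\text{H}_0(F)/x\text{H}_0(F)\neq 0$, so the bottom homology remains nonzero of dimension $0$ and the hypothesis $\dim_R\text{H}_i(F)\leq i$ never improves to $\dim\text{H}_i\leq i-1$, while $\dim R$ drops by $1$; you lose exactly one dimension per cut, as you yourself observe.

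The paper's proof diverges from yours precisely at this point, and the divergence is what makes it work. It first reduces to $R$ complete, then, by tensoring with $R/\fp$ for a minimal prime $\fp$ with $\dim R/\fp=\dim R$ (the hypotheses persist by \cite[Corollary A.4.16]{C2}), to $R$ a complete --- hence catenary and equidimensional --- domain. It then inducts on $\dim R$, not on $t$: if all homology has finite length, the New Intersection theorem applies; otherwise it localizes at a prime $\fp\in\Supp_R(\text{H}_j(F))$ with $\dim R/\fp=1$, where $\dim_R\text{H}_j(F)>0$. Localization at a non-maximal prime annihilates every finite-length homology module (which cutting by $x$ cannot do), keeps $F_\fp$ non-exact, lowers each $\dim_R\text{H}_i(F)$ by at least one (catenarity of the domain), and lowers $\dim R$ by exactly one --- so the induction closes, uniformly for all $t\geq 0$, with no separate base case beyond the finite-length one. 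If you are willing to quote \cite[Theorem 1.2]{F1} as a black box for $t=0$, your hyperplane reduction does yield a complete (and different) proof of the lemma; but since the lemma essentially \emph{is} Foxby's theorem with the parameter $t$ added, that would assume the crux rather than prove it.
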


\begin{proof} Without loss of generality, we may and do assume that $R$ is complete. Let $\fp$ be a prime ideal of $R$ with
$\dim R/\fp=\dim R$ and set $\widetilde{F}:=F\otimes_R R/\fp$. Then $\widetilde{F}$ is a bounded complex of finitely generated
free $R/\fp$-modules.  By virtue of \cite[Corollary A.4.16]{C2}, we have $\inf \widetilde{F}=\inf F$, and so the complex $\widetilde{F}$ is
non-exact.

Next, we show that $\dim_{R/\fp}(\text{H}_i(\widetilde{F}))\leq i+t$ for all $0\leq i\leq s$. Suppose that the contrary holds.
Then, we may choose $\ell$ to be the least integer such that $\dim_{R/\fp}(\text{H}_{\ell}(\widetilde{F}))>\ell+t$. So, there is
a prime ideal $\fq/\fp\in \Supp_{R/\fp}(\text{H}_{\ell}(\widetilde{F}))$ such that $\dim (R/\fp)/(\fq/\fp)>\ell+t$. From the
choice of $\ell$, it turns out that $\inf((\widetilde{F})_{\fq/\fp})=\ell$. But,
$$\begin{array}{ll}
(\widetilde{F})_{\fq/\fp}&\cong (F\otimes_RR/\fp)\otimes_{R/\fp}(R/\fp)_{\fq/\fp}\\
&\cong F\otimes_R R_{\fq}/\fp R_{\fq} \\
&\cong (F\otimes_RR_{\fq})\otimes_{R_{\fq}}R_{\fq}/\fp R_{\fq} \\
&\cong F_{\fq}\otimes_{R_{\fq}}R_{\fq}/\fp R_{\fq},\\
\end{array}$$
and so applying \cite[Corollary A.4.16]{C2} again, yields that $\inf F_{\fq}=\ell$. Hence $\fq\in \Supp_R(\text{H}_{\ell}(F))$,
and so $\dim_R(\text{H}_{\ell}(F))>\ell+t$. We arrive at a contradiction.  Therefore, in the rest of proof, we assume that $R$
is a catenary local domain.

We proceed by induction on $d:=\dim R$. Obviously, the claim holds for $d=0$. Suppose that $d>0$ and the claim
holds for $d-1$. If $\dim_R(\text{H}_i(F))\leq 0$ for all $0\leq i\leq s$, then by the New Intersection theorem, we deduce
that $\dim R\leq s$. So, we may assume that $\dim_R(\text{H}_j(F))> 0$ for some $0\leq j\leq s$. Then, we can take a prime ideal
$\fp\in \Supp_R(\text{H}_j(F))$ such that $\dim R/\fp=1$. Now, $F_{\fp}$ is a non-exact complex of finitely generated free
$R_{\fp}$-modules and $$\dim_{R_{\fp}}(\text{H}_i(F_{\fp}))\leq \dim_R(\text{H}_i(F))-1\leq i+(t-1)$$ for all $0\leq i\leq s$.
Since $\dim R_{\fp}=\dim R-1$, the induction hypothesis implies that $\dim R_{\fp}\leq s+(t-1)$. Hence, $\dim R\leq s+t$.
\end{proof}

To present the next result, we have to fix some notation.  Let $t$ be a non-negative integer and $$F= \cdots \lo F_i\lo F_{i-1}
\lo \cdots \lo F_{t+1}\lo F_t\lo 0$$ be a complex of finitely generated free $R$-modules. For each $i\geq t$, set $$\gamma_i(F):
=r_i-r_{i-1}+\cdots +(-1)^{i-t}r_t,$$ where $r_j$ is the rank of $F_j$ for each $j\geq t$.

\begin{lemma}\label{22} Let $(R,\fm,k)$ be a local ring and $t\leq \dim R$ a non-negative integer. Let $$F=\cdots\lo F_i\lo F_{i-1}
\lo \cdots \lo F_t\lo 0$$ be a complex of finitely generated free $R$-modules such that $\dim_R(\text{H}_i(F))\leq i$ for all
$t\leq i\leq \dim R$. Then for each integer $t\leq n\leq \dim R$, we have:
\begin{itemize}
\item[(a)] $\gamma_n(F)\geq 0$.
\item[(b)] $\gamma_n(F)>0$ if $\text{H}_t(F)\neq 0$ and either $\dim_R(\text{H}_n(F))=n$ or $n<\dim R$.
\end{itemize}
\end{lemma}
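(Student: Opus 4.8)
The plan is to extract $\gamma_n(F)$ as a length at a single well‑chosen minimal prime, handle (a) and the case $n=\dim R$ of (b) directly from that computation, and reserve Lemma \ref{21} for the genuinely delicate case $n<\dim R$ of (b). Write $d:=\dim R$, put $Z_n:=\Ker(F_n\to F_{n-1})$, and let $F^{(n)}$ be the brutally truncated complex $0\to F_n\to F_{n-1}\to\cdots\to F_t\to 0$, so that $\text{H}_i(F^{(n)})=\text{H}_i(F)$ for $t\le i<n$ while $\text{H}_n(F^{(n)})=Z_n$. First I would fix a minimal prime $\fp$ of $R$ with $\dim R/\fp=d$; then $R_\fp$ is Artinian, and for $t\le j<n$ the hypothesis $\dim_R\text{H}_j(F)\le j<d$ forces $\fp\notin\Supp\text{H}_j(F)$, i.e. $\text{H}_j(F^{(n)})_\fp=0$. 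Computing the Euler characteristic of the finite‑length complex $F^{(n)}_\fp$ in two ways gives
$$\Big(\sum_{j=t}^n(-1)^jr_j\Big)\operatorname{length}_{R_\fp}(R_\fp)=\sum_{j=t}^n(-1)^j\operatorname{length}_{R_\fp}\big(\text{H}_j(F^{(n)})_\fp\big)=(-1)^n\operatorname{length}_{R_\fp}\big((Z_n)_\fp\big),$$
and since the left‑hand side equals $(-1)^n\gamma_n(F)\operatorname{length}_{R_\fp}(R_\fp)$, this yields the key identity $\gamma_n(F)\cdot\operatorname{length}_{R_\fp}(R_\fp)=\operatorname{length}_{R_\fp}((Z_n)_\fp)$. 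Part (a) is immediate, and the same identity holds at every minimal prime of maximal dimension.

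For (b) I would dispose of $n=d$ first: here the displayed hypothesis forces $\dim_R\text{H}_n(F)=n=d$, so I may choose $\fp$ above to lie in $\Supp\text{H}_n(F)$. Since $Z_n$ surjects onto $\text{H}_n(F)$, localizing gives a surjection $(Z_n)_\fp\twoheadrightarrow\text{H}_n(F)_\fp\ne 0$, whence $(Z_n)_\fp\ne0$ and $\gamma_n(F)>0$ by the identity. The remaining case is $n<d$, and here is where I expect the work. Suppose for contradiction that $\gamma_n(F)=0$. Then the identity gives $(Z_n)_\fp=0$ at \emph{every} minimal prime $\fp$ with $\dim R/\fp=d$; since $\Ass Z_n\subseteq\Ass F_n=\Ass R$, any prime realizing $\dim_RZ_n=d$ would be precisely such a minimal prime, so in fact $\dim_RZ_n\le d-1$. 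Consequently $\dim_R\text{H}_i(F^{(n)})\le i+(d-1-n)$ for all $t\le i\le n$ (the bound reads $\le i$ for $i<n$ and $\le d-1=n+(d-1-n)$ for $i=n$).

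Because $\text{H}_t(F^{(n)})=\text{H}_t(F)\ne 0$ by hypothesis, the complex $F^{(n)}$ is non‑exact. Applying Lemma \ref{21} to it (after the shift placing $F_t$ in degree $0$, so with top index $s=n-t$ and parameter $t+d-1-n\ge 0$) then forces $\dim R\le (n-t)+(t+d-1-n)=d-1$, a contradiction. The crux of the whole argument is exactly the top homology $Z_n$ of the truncation, whose dimension can a priori be as large as $d$: the assumption $\gamma_n(F)=0$ is what pins $Z_n$ off every maximal‑dimensional minimal prime and thereby lowers $\dim_RZ_n$ to $d-1$, which is precisely what is needed to feed $F^{(n)}$ into the New Intersection–type bound of Lemma \ref{21}. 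Finally one should note that the boundary case $n=t$ is trivial (then $\gamma_t(F)=r_t$, which is positive as soon as $\text{H}_t(F)\ne0$), and that the Euler‑characteristic identity is unaffected by whether $F$ is bounded above, since only $F_t,\dots,F_n$ and the differential $F_n\to F_{n-1}$ enter it.
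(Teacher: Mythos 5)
Your proof is correct and follows essentially the same route as the paper's: truncate at spot $n$, detect $\gamma_n(F)$ by localizing the truncated complex (you via an Euler-characteristic/length identity at minimal primes $\fp$ with $\dim R/\fp=\dim R$, the paper via freeness of $(Z_n)_{\fp}$ at any prime with $\dim R/\fp\geq n$), and feed the truncation into Lemma \ref{21} to rule out $Z_n$ having small dimension. The only organizational difference is that the paper runs a single contrapositive argument (if $\dim_R Z_n<n$, then Lemma \ref{21} forces $\dim R\leq n$ and $\dim_R(\text{H}_n(F))<n$, so both alternatives in (b) fail), whereas you split into the cases $n=\dim R$ and $n<\dim R$ and, in the latter, apply Lemma \ref{21} with the shifted parameter $t+\dim R-1-n$; both are instances of the same two ideas.
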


\begin{proof} Let the complex $$G=0\lo F_n\lo F_{n-1}\lo \cdots \lo F_t\lo 0$$ be the truncation of $F$ at the spot $n$ and let
$Z_n$ denote the kernel of the differential map $F_n\lo F_{n-1}$. Then, $\text{H}_i(G)=\text{H}_i(F)$ for all $i<n$ and
$\text{H}_n(G)=Z_n$. If $n=t$, then $\gamma_n(F)$ is the rank of the free $R$-module $F_t$, and so both assertions are immediate
in this case. So, in the rest of the proof, we assume that $n>t$.

(a) Let $\fp$ be a prime ideal of $R$ with $\dim R/\fp\geq n$. As $\dim_R(\text{H}_i(F))\leq i$ for all $t\leq i\leq n-1$, it
follows that the complex $$0\lo (Z_n)_{\fp}\lo (F_n)_{\fp}\lo (F_{n-1})_{\fp}\lo \cdots \lo (F_t)_{\fp}\lo 0$$ is exact. Hence,
$(Z_n)_{\fp}$ is a free $R_{\fp}$-module of rank $\gamma_n(F)$. So, $\gamma_n(F)\geq 0$.

(b) Assume that $\text{H}_t(F)\neq 0$. Suppose that $\dim_R Z_n<n$. Then, applying Lemma \ref{21} to the complex $G$, we get
$\dim R\leq n$. Also, we have $$\dim_R(\text{H}_n(F))\leq \dim_R Z_n<n.$$ Thus, if either $\dim_R(\text{H}_n(F))=n$ or
$n<\dim R$, then $\dim_R Z_n\geq n$. Let $\fp\in \Supp_R Z_n$ be such that $\dim R/\fp\geq n$. Then, as we saw in the proof of
(a), the non-zero $R_{\fp}$-module $(Z_n)_{\fp}$ is free of rank $\gamma_n(F)$, and so $\gamma_n(F)>0$.
\end{proof}

Let $(R,\fm,k)$ be a local ring. For an $R$-complex $X$ and an integer $n$, the $n$th Betti number of $X$, $\beta^R_n(X)$, is
defined as the rank of the $k$-vector space $\Tor^R_n(k,X)(:=\text{H}_{n}(k\otimes_R^{{\bf L}}X))$. Dually, the $n$th Bass number
of $X$, $\mu^n_R(X)$, is defined as the rank of the $k$-vector space $\Ext_R^n(k,X)(:=\text{H}_{-n}({\bf R}\Hom_R(k,X)))$. Also,
dimension, depth and Cohen-Macaulay defect of an $R$-complex $X$ are, respectively, defined as follows: $$\dim_RX:=\sup\{\dim_R(
\text{H}_i(X))-i \mid \ i\in\mathbb{Z}\},$$  $$\depth_RX:=\inf\{i\in \mathbb{Z} \mid \Ext_R^i(k,X)\neq 0 \}, \  \  \text{and}$$
$$\cmd_RX:=\dim_RX-\depth_RX.$$ For an $R$-complex $X$, it is known that $\dim_RX=\sup \{\dim R/\fp-\inf X_{\fp} \mid \fp\in
\Spec R\}.$ We set $$\Assh_RX:=\{\fp\in \Spec R \mid \dim R/\fp-\inf X_{\fp}=\dim_RX\}.$$ Finally, the type of an $R$-complex
$X$ is defined to be $\mu^{\dim_RX}_R(X)$.

\begin{definition}\label{23} Let $(R,\fm,k)$ be a local ring. A non-exact complex $X\in \mathcal{D}^f_{\square}(R)$ is
called Cohen-Macaulay if $\depth_RX=\dim_RX.$
\end{definition}

Let $(R,\fm,k)$ be a local ring. A \emph{semidualizing complex} for $R$ is a complex $C\in \mathcal{D}_{\Box}^f(R)$ such that
the homothety morphism $R\longrightarrow {\bf R}\Hom_R(C,C)$ is an isomorphism in $\mathcal{D}(R)$. If, furthermore, $C$ has
finite injective dimension, then it is called a \emph{dualizing complex}. It is obvious that any shift of a (semi)dualizing
complex is again a (semi)dualizing complex. A dualizing complex $D$  satisfying $\sup D=\dim R$, is called a \emph{normalized
dualizing complex}.  Given a dualizing complex $D$, it is immediate that $\Sigma^{\dim R-\sup D}D$ is a normalized dualizing
complex.  For a normalized dualizing complex $D$, it is known that $\mu^i_R(D)=0$ for all $i\neq 0$ and $\mu^0_R(D)=1$; in
particular $D$ has depth zero.

An $R$-module which is a (semi)dualizing complex for $R$ is said to be a \emph{(semi)dualizing module}. Immediate examples of
semidualizing modules are free $R$-modules of rank one. It is known that the ring $R$ possesses a dualizing complex if and only
if it is homomorphic image of a Gorenstein local ring. In particular, in view of the Cohen Structure theorem, every complete
local ring admits a (normalized) dualizing complex. Also, by virtue of \cite[Theorem 3.3.6]{BH}, we can see that the ring $R$
admits a dualizing module if and only if it is Cohen-Macaulay and it is homomorphic image of a Gorenstein local ring.

\begin{remark}\label{24} Let $(R,\fm,k)$ be a local ring and $D$ a normalized dualizing complex for $R$. There is a dagger
duality functor $$(-)^{\dag}:={\bf R}\Hom_R(-,D):\mathcal{D}_{\Box}^f(R)\lo \mathcal{D}_{\Box}^f(R),$$ such that for every
$X\in \mathcal{D}_{\Box}^f(R)$, the following assertions hold:
\begin{itemize}
\item[(a)] $(X^{\dag})^{\dag}\simeq X$.
\item[(b)] $\dim_RX=\sup X^{\dag}$ and $\depth_RX=\inf X^{\dag}$.
\item[(c)] $X$ is a Cohen-Macaulay complex if and only if $\amp X^{\dag}=0$.
\item[(d)] $\mu^n_R(X)=\beta^R_n(X^{\dag})$ for all integers $n$.
\item[(e)] $\id_RX=\pd_RX^{\dag}$.
\item[(f)] $X$ is a semidualizing complex for $R$ if and only if $X^{\dag}$ is so.
\end{itemize}
\end{remark}

\begin{proof} For parts (a) and (b) see e.g. \cite[Theorem A.8.5]{C2}. For part (b), note that $\depth_RD=0$.

(c) is obvious by (b).

(d) As $D$ is a normalized dualizing complex for $R$, we have $\mu^i_R(D)=0$ for all $i\neq 0$ and $\mu^0_R(D)=1$. Thus, for
every integer $n$, \cite[Theorem 4.1(a)]{F2} yields that $$\mu^n_R(X)=\mu^n_R({\bf R}\Hom_R(X^{\dag},D))=\sum \limits_{i\in
\mathbb{Z}}\beta^R_i(X^{\dag}) \mu^{n-i}_R(D)=\beta^R_n(X^{\dag}).$$

(e) For each complex $Y\in \mathcal{D}_{\Box}^f(R)$, by virtue of \cite[Corollary 2.10.F and Proposition 5.5]{AF}, it turns out
that $\pd_RY=\sup (k\otimes_R^{\bf L}Y)$ and $\id_RY=-\inf ({\bf R}\Hom_R(k,Y))$, and so $$\pd_RY=\sup \{i\in \mathbb{Z}
\mid \beta^R_i(Y)\neq 0\}, \  \  \text {and}$$ $$\id_RY=\sup \{i\in \mathbb{Z} \mid \mu^i_R(Y)\neq 0\}.$$ Therefore, (d)
implies that $\id_RX=\pd_RX^{\dag}$.

(f) See \cite[Corollary 2.12]{C1}.
\end{proof}

In the statement and proof of the next result, we use the notions of weak annihilator of a complex and attached prime ideals
of an Artinian module. We recall these notions. For an Artinian $R$-module $A$,  the set of attached prime ideals of $A$,
$\Att_RA$, is the set of all prime ideals $\fp$ of $R$ such that $\fp=\Ann_RL$ for some quotient $L$ of $A$; see e.g.
\cite[Chapter 7]{BS} for more details. Following \cite{Ap}, for a complex $X\in\mathcal{D}(R)$, the weak annihilator of $X$
is defined as $\Ann_RX:=\bigcap \limits_{i\in \mathbb{Z}} \Ann_R(\text{H}_i(X))$.

\begin{lemma}\label{25} Let $(R,\fm,k)$ be a local ring and $X\in \mathcal{D}_{\Box}^f(R)$ a non-exact complex. Let
$n:=\sup X$ and $d:=\dim_RX$.
\begin{itemize}
\item[(a)] $\dim_R(\text{H}_i(X))\leq i+d$ for all $i\leq n$.
\item[(b)] If $X$ is Cohen-Macaulay, then $\dim_R(\text{H}_n(X))=n+d$.
\item[(c)] If $R$ possesses a normalized dualizing complex $D$ and $\amp X=0=\amp X^{\dag}$, then $\Ann_RX=\Ann_RX^{\dag}$.
\end{itemize}
\end{lemma}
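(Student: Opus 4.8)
The plan is to treat the three parts in order, using the dagger duality of Remark \ref{24} as the organizing tool. Part (a) is immediate from the definition of dimension: since $\dim_RX=\sup\{\dim_R(\text{H}_j(X))-j\mid j\in\mathbb{Z}\}=d$, every index satisfies $\dim_R(\text{H}_j(X))-j\le d$, and specializing to $j=i\le n$ gives $\dim_R(\text{H}_i(X))\le i+d$ (for $i>n$ the homology vanishes and there is nothing to prove). So no real work is needed here.

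For part (b) I would first reduce to the case where $R$ is complete, hence admits a normalized dualizing complex $D$. This is harmless because completion is faithfully flat: it preserves $\sup X=n$, each module dimension $\dim_R(\text{H}_i(X))$, and both $\dim_RX$ and $\depth_RX$, so it preserves the Cohen-Macaulay hypothesis as well as the quantity to be computed. With $D$ in hand, the Cohen-Macaulay hypothesis reads $\amp X^{\dag}=0$ by Remark \ref{24}(c); together with $\sup X^{\dag}=\dim_RX=d$ and $\inf X^{\dag}=\depth_RX=d$ from Remark \ref{24}(b), this forces $X^{\dag}\simeq\Sigma^{d}M$ for the single module $M:=\text{H}_d(X^{\dag})$. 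Applying Remark \ref{24}(b) to $X^{\dag}$ gives $\dim_RM-d=\dim_RX^{\dag}=\sup (X^{\dag})^{\dag}=\sup X=n$, so $\dim_RM=n+d$. Dualizing back, $X\simeq(X^{\dag})^{\dag}\simeq\Sigma^{-d}M^{\dag}$, whence $\text{H}_n(X)\cong\text{H}_{n+d}(M^{\dag})$; and since $\sup M^{\dag}=\dim_RM=n+d$, this is exactly the \emph{top} homology of $M^{\dag}$. It then remains to prove the clean statement that the top homology of the dagger dual of a module $M$ has dimension $\dim_RM$.

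For that last statement the upper bound $\dim_R(\text{H}_{n+d}(M^{\dag}))\le n+d$ follows from part (a) applied to $M^{\dag}$, using that $\dim_RM^{\dag}=\sup M=0$ so that $\dim_R(\text{H}_i(M^{\dag}))\le i$ for every $i$. For the lower bound I would choose $\fp\in\Min_RM$ with $\dim R/\fp=\dim_RM=n+d$ and show $\fp\in\Supp_R(\text{H}_{n+d}(M^{\dag}))$. Localizing, $(M^{\dag})_{\fp}\simeq\RHom_{R_{\fp}}(M_{\fp},D_{\fp})$; since $\Sigma^{-\dim R/\fp}D_{\fp}$ is a normalized dualizing complex for $R_{\fp}$ while $M_{\fp}$ is a nonzero finite-length $R_{\fp}$-module, the normalized dagger dual of $M_{\fp}$ is its Matlis dual concentrated in degree $0$, and unwinding the shift shows $(M^{\dag})_{\fp}$ is concentrated in homological degree $\dim R/\fp=n+d$ and is nonzero there. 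Hence $\fp\in\Supp_R(\text{H}_{n+d}(M^{\dag}))$ and $\dim_R(\text{H}_{n+d}(M^{\dag}))\ge n+d$, giving equality; transporting back along the completion finishes (b). I expect this localization step, namely keeping track of the normalization shift of the dualizing complex under passage to $R_{\fp}$, to be the main technical point of the lemma.

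Finally, for part (c) the hypotheses $\amp X=0=\amp X^{\dag}$ mean $X\simeq\Sigma^{s}M$ and $X^{\dag}\simeq\Sigma^{s'}N$ for modules $M,N$ and integers $s,s'$, so that $\Ann_RX=\Ann_RM$ and $\Ann_RX^{\dag}=\Ann_RN$. I would then argue element by element: for $a\in R$, multiplication $\mu_a$ on $M$ vanishes iff $a\in\Ann_RM$, and because $X\simeq\Sigma^{s}M$ this is equivalent to $\mu_a^{X}=0$ in $\mathcal{D}(R)$. The functor $(-)^{\dag}$ is $R$-linear, and since it is a contravariant equivalence quasi-inverse to itself by Remark \ref{24}(a), it carries $\mu_a^{X}$ to $\mu_a^{X^{\dag}}$ and satisfies $\mu_a^{X}=0$ if and only if $\mu_a^{X^{\dag}}=0$; the latter, via $X^{\dag}\simeq\Sigma^{s'}N$, is equivalent to $a\in\Ann_RN$. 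Chaining these equivalences yields $\Ann_RX=\Ann_RM=\Ann_RN=\Ann_RX^{\dag}$. The only point requiring care is that for a complex concentrated in a single degree the condition \textquotedblleft$\mu_a=0$ in $\mathcal{D}(R)$\textquotedblright\ genuinely coincides with \textquotedblleft$a$ annihilates the homology module,\textquotedblright\ which holds because morphisms in $\mathcal{D}(R)$ from a module to itself are just module homomorphisms.
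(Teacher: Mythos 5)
Your proof is correct, and apart from part (a) (which is definitional in both treatments) it takes genuinely different routes at the two substantive points. In (b) you and the paper share the same skeleton: pass to complete $R$, use Remark \ref{24} to write $X^{\dag}\simeq\Sigma^{d}N$ with $N:=\text{H}_d(X^{\dag})$ of dimension $n+d$, identify $\text{H}_n(X)$ with the top homology $\text{H}_{n+d}(N^{\dag})$, and get the upper bound from (a). But for the lower bound the paper applies the Local Duality theorem at $\fm$ and the Brodmann--Sharp description of the attached primes of top local cohomology, together with the fact that Matlis duality interchanges $\Ass$ and $\Att$ over a complete ring; this yields the sharper statement $\Ass_R(\text{H}_n(X))=\{\fp\in\Ass_RN\mid\dim R/\fp=n+d\}$, of which the dimension equality is a corollary. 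You instead localize at a prime $\fp\in\Min_RN$ with $\dim R/\fp=n+d$ and apply local duality over $R_{\fp}$ to the finite-length module $N_{\fp}$; this hinges on the fact that $\Sigma^{-\dim R/\fp}D_{\fp}$ is a normalized dualizing complex for $R_{\fp}$, i.e.\ that $\fp\mapsto\dim R/\fp$ is the codimension function of $D$. That fact is standard (see \cite[Chapter V]{H}) and is valid here because over complete $R$ each $R/\fp$ is a catenary domain, so $\dim R/\fq=\dim R/\fp-\Ht(\fq/\fp)$ for $\fp\subseteq\fq$; since it carries essentially all the weight of your argument, it deserves a citation or a proof rather than only being named. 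Your route avoids attached primes altogether, while the paper's route yields finer information (the associated primes themselves). In (c) the paper argues by chaining inclusions of weak annihilators, $\Ann_RM\subseteq\Ann_R(M^{\dag})=\Ann_RN\subseteq\Ann_R(N^{\dag})=\Ann_RM$, using $M^{\dag}\simeq\Sigma^{n+d}N$ and $N^{\dag}\simeq\Sigma^{n+d}M$ together with the elementary inclusion $\Ann_RL\subseteq\Ann_R(\text{H}_i(L^{\dag}))$; your argument via $R$-linearity and faithfulness of $(-)^{\dag}$ on the multiplication morphisms $\mu_a$ is equally valid (faithfulness follows from the naturality of the biduality isomorphism in Remark \ref{24}(a)), and it has the virtue of showing exactly where the hypotheses $\amp X=0=\amp X^{\dag}$ enter, namely to identify vanishing of $\mu_a$ in $\mathcal{D}(R)$ with annihilation of the single homology module.
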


\begin{proof} (a) It is obvious, because $$d=\dim_RX=\sup\{\dim_R(\text{H}_i(X))-i\mid\ i\in\mathbb{Z}\}.$$

(b) Without loss of generality, we may and do assume that $R$ is complete. Then $R$ possesses a normalized dualizing complex
$D$. By virtue of Remark \ref{24}, we have $\amp X^{\dag}=0$, $\sup X^{\dag}=d$ and $\dim_RX^{\dag}=n$.
So, $X^{\dag}\simeq \Sigma^{d}\text{H}_d(X^{\dag})$. Set $N:=\text{H}_d(X^{\dag})$. As $\dim_RX^{\dag}=n$, it follows that
$\dim_RN=n+d$.  Thus, in view of Remark \ref{24}(a) and the Local Duality theorem \cite[Chapter V, Theorem 6.2]{H}, we have the
following display of isomorphisms:
$$\begin{array}{ll}
\text{H}_{\fm}^{\dim_RN}\left(N\right)&\cong \Hom_R\left(\text{H}_{n+d}\left(N^{\dag}\right) ,\text{E}_R(R/\fm)\right)\\
&\cong \Hom_R\left(\text{H}_n\left(\Sigma^{-d}N^{\dag}\right),\text{E}_R(R/\fm)\right)\\
&\cong \Hom_R\left(\text{H}_n\left(\left(\Sigma^{d}N\right)^{\dag}\right),\text{E}_R(R/\fm)\right)\\
&\cong \Hom_R\left(\text{H}_n\left(\left(X^{\dag}\right)^{\dag}\right),\text{E}_R(R/\fm)\right)\\
&\cong \Hom_R\left(\text{H}_n(X),\text{E}_R(R/\fm)\right).
\end{array}$$
Now, \cite[Theorem 7.3.2 and Exercise 7.2.10(iv)]{BS} yields that $$\Ass_R(\text{H}_n(X))=\Att_R(\text{H}_{\fm}^{\dim_RN}
\left(N\right))=\{\fp\in \Ass_RN\mid\ \dim R/\fp=\dim_RN \},$$ and so $\dim_R(\text{H}_n(X))=n+d$.

(c) First of all note that for an $R$-module $L$, an $R$-complex $Y$ and an integer $i$, we may easily verify that
$\Ann_RL\subseteq \Ann_R(\text{H}_i(L^{\dag}))$ and $\Ann_RY=\Ann_R(\Sigma^{i}Y)$. Set $M:=\text{H}_n(X)$ and
$N:=\text{H}_d(X^{\dag})$. Then $X\simeq \Sigma^{n}M$ and $X^{\dag}\simeq \Sigma^{d}N$. Thus, the claim is equivalent to
$\Ann_RM=\Ann_RN$. Now, we have $M^{\dag}\simeq \Sigma^{n+d}N$ and $N^{\dag}\simeq \Sigma^{n+d}M$, and so
$$\begin{array}{ll}
\Ann_RM&\subseteq \Ann_R(M^{\dag})\\
&=\Ann_RN\\
&\subseteq \Ann_R(N^{\dag})\\
&=\Ann_RM.
\end{array}$$
\end{proof}

\begin{lemma}\label{26} Let $(R,\fm,k)$ be a local ring. Let $X\in \mathcal{D}^f_{\Box}(R)$ be a non-exact complex such that
$\beta^R_{\sup X}(X)=1$ and $\amp X=0$. Then $X\simeq \Sigma^{\sup X}R/\Ann_RX$. Furthermore:
\begin{itemize}
\item[(a)] If  $X$ is Cohen-Macaulay, then the local ring $R/\Ann_RX$ is also Cohen-Macaulay.
\item[(b)] If $\Ann_RX=0$, then $\pd_RX<\infty$.
\end{itemize}
\end{lemma}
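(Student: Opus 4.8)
The plan is to use the hypothesis $\amp X=0$ to collapse $X$ to a single module, and then read off everything from that module. Set $n:=\sup X$. Since $\amp X=\sup X-\inf X=0$, we have $\inf X=\sup X=n$, so $\text{H}_i(X)=0$ for every $i\neq n$; a complex whose homology is concentrated in a single degree is isomorphic in $\mathcal{D}(R)$ to the shift of that homology module, hence $X\simeq \Sigma^n M$ with $M:=\text{H}_n(X)$ a finitely generated $R$-module. To interpret the Betti hypothesis, note that $k\otimes_R^{{\bf L}}X\simeq \Sigma^n(k\otimes_R^{{\bf L}}M)$, so taking homology in degree $n$ gives $\Tor^R_n(k,X)\cong \Tor^R_0(k,M)=M/\fm M$. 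Thus $\beta^R_{\sup X}(X)=\dim_k(M/\fm M)$ is the minimal number of generators of $M$, and the assumption $\beta^R_{\sup X}(X)=1$ says precisely that $M$ is cyclic, i.e. $M\cong R/\Ann_R M$. Because the homology of $X$ lives only in degree $n$, the weak annihilator satisfies $\Ann_R X=\Ann_R M$, and therefore $X\simeq \Sigma^n R/\Ann_R X=\Sigma^{\sup X}R/\Ann_R X$.

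For part (a) I would transport the Cohen-Macaulay condition through the shift. From the defining formulas one computes $\dim_R X=\dim_R M-n$ and $\depth_R X=\depth_R M-n$, so $X$ is Cohen-Macaulay exactly when $\depth_R M=\dim_R M$, i.e. when $M=R/\Ann_R X$ is a Cohen-Macaulay $R$-module. Since the depth and dimension of $R/\Ann_R X$ computed over $R$ coincide with those computed over the ring $R/\Ann_R X$ itself, this is the assertion that $R/\Ann_R X$ is a Cohen-Macaulay local ring. For part (b), the hypothesis $\Ann_R X=0$ forces $M=R/\Ann_R X=R$, whence $X\simeq \Sigma^n R$; a shift of the free module $R$ evidently has finite projective dimension (indeed $\pd_R X=n$, using the equality $\pd_R Y=\sup\{i\mid \beta^R_i(Y)\neq 0\}$ recorded in the proof of Remark \ref{24}).

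The whole argument is elementary once the reduction $X\simeq \Sigma^n M$ is in place, so I do not expect a genuine obstacle. The only steps requiring care are the bookkeeping of the shift $\Sigma^n$ in the computations of $\beta^R_n$, $\depth_R$ and $\dim_R$, and the identification $\Ann_R X=\Ann_R M$, both of which hinge on the homology of $X$ being concentrated in the single degree $n$.
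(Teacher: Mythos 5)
Your proposal is correct and follows essentially the same route as the paper's proof: both collapse $X$ to $\Sigma^n\mathrm{H}_n(X)$ using $\amp X=0$, use shift-invariance of Betti numbers to see that $\mathrm{H}_n(X)$ is cyclic, hence isomorphic to $R/\Ann_RX$, and then read off (a) and (b) directly (the paper simply declares these last two assertions trivial, while you spell out the shift bookkeeping for $\dim_R$, $\depth_R$ and $\pd_R$).
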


\begin{proof} Set $R_X:=R/\Ann_RX$, $n:=\sup X$ and $M:=\text{H}_n(X)$. Then $X\simeq \Sigma^n M$. Now, $$\beta^R_{0}(M)
=\beta^R_{0}(\Sigma^{-n}X)=\beta^R_{n}(X)=1.$$ Thus $M$ is cyclic, and so $M\cong R_X$. Therefore,  $X\simeq \Sigma^nR_X$,
as required. Now, the remaining assertions are trivial.
\end{proof}

\begin{lemma}\label{27} Let $(R,\fm,k)$ be a local ring. Let $X\in \mathcal{D}^f_{\Box}(R)$ be a non-exact complex with
$\beta^R_{\sup X}(X)=1$. Suppose that $\dim_RX=0$ and $\dim_R(\text{H}_{\sup X}(X))=\sup X$. Then $X\simeq \Sigma^{\sup X}
R/\Ann_RX$.
\end{lemma}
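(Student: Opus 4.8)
The plan is to reduce everything to showing that $\amp X=0$, after which Lemma~\ref{26} applies verbatim and yields $X\simeq\Sigma^{\sup X}R/\Ann_RX$. Write $n:=\sup X$ and $t:=\inf X$, so that $\amp X=n-t$; thus the whole problem amounts to proving $t=n$. The engine driving the argument will be the inequalities on the numbers $\gamma_i(F)$ provided by Lemma~\ref{22}, applied to the minimal free resolution $F$ of $X$.

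First I would extract the numerical content of the hypothesis $\dim_RX=0$. Since, by definition, $\dim_RX=\sup\{\dim_R(\text{H}_i(X))-i\mid i\in\mathbb{Z}\}=0$ (this is also recorded in Lemma~\ref{25}(a)), we get $\dim_R(\text{H}_i(X))\le i$ for every $i$. As a nonzero finitely generated module has non-negative dimension, this forces $\text{H}_i(X)=0$ for $i<0$, hence $t=\inf X\ge 0$; and the given equality $\dim_R(\text{H}_n(X))=n$ exhibits a nonzero module of dimension $n$, whence $n\le\dim R$. So $0\le t\le n\le\dim R$. Now let $F=\cdots\lo F_i\lo\cdots\lo F_t\lo 0$ be the minimal free resolution of $X$, so that $\text{H}_i(F)=\text{H}_i(X)$, $\rank F_i=\beta^R_i(X)=:r_i$, and $F_i=0$ for $i<t$. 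The bound $\dim_R(\text{H}_i(F))\le i$ holds for all $t\le i\le\dim R$ (for $i>n$ the homology vanishes), so $F$ satisfies the hypotheses of Lemma~\ref{22} with this non-negative $t\le\dim R$.

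For the heart of the argument I would assume, toward a contradiction, that $\amp X>0$, i.e. $t\le n-1$. Since $X$ is non-exact, $\text{H}_t(F)=\text{H}_{\inf X}(X)\ne 0$. Applying Lemma~\ref{22}(b) at the spot $m=n$—where the required alternative is supplied exactly by the hypothesis $\dim_R(\text{H}_n(X))=n$—gives $\gamma_n(F)>0$; applying it at $m=n-1$, now using $n-1<\dim R$, gives $\gamma_{n-1}(F)>0$. The key observation is the telescoping identity $\gamma_n(F)+\gamma_{n-1}(F)=r_n=\beta^R_n(X)=1$, which is immediate from the alternating-sum definition of $\gamma$. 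As $\gamma_n(F)$ and $\gamma_{n-1}(F)$ are positive integers, their sum is at least $2$, contradicting $1$. Hence $\amp X=0$, and Lemma~\ref{26} completes the proof.

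The main subtlety to get right is that Lemma~\ref{22}(b) must be applied legitimately at \emph{both} consecutive spots $n$ and $n-1$: the hypothesis $\dim_R(\text{H}_n(X))=n$ is precisely what licenses the top spot (it cannot be replaced by ``$n<\dim R$'' in the boundary case $n=\dim R$), while the inequality $n\le\dim R$ licenses the spot $n-1$. Once these two applications are in place, the telescoping identity forces the contradiction automatically, so the only genuine work beyond invoking the earlier lemmas is the bookkeeping guaranteeing $0\le t\le n\le\dim R$ and $\text{H}_t(F)\ne 0$, all of which flows from $\dim_RX=0$ together with the non-exactness of $X$.
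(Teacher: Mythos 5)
Your proof is correct and takes essentially the same route as the paper's own argument: reduce to showing $\amp X=0$ via Lemma~\ref{26}, then, assuming $\inf X<\sup X$, apply Lemma~\ref{22}(b) at the two consecutive spots $n$ and $n-1$ (justified respectively by $\dim_R(\text{H}_n(X))=n$ and by $n-1<\dim R$) and contradict the telescoping identity $\gamma_n(F)+\gamma_{n-1}(F)=\beta^R_n(X)=1$. The bookkeeping you include ($0\le \inf X\le n\le\dim R$ and $\text{H}_{\inf X}(X)\neq 0$) matches the paper's, so there is no gap.
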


\begin{proof} In view of Lemma \ref{26}, the assertion is equivalent to $\amp X=0$. Let $n:=\sup X$ and $t:=\inf X$. We
claim that $n=t$. On the contrary, assume that $n>t$. Let $$F=\cdots\lo F_i\lo F_{i-1}\lo \cdots \lo F_t\lo 0$$ be the minimal
free resolution of $X$. Then $F_i$ is a free $R$-module, of finite rank $\beta^R_i(X)$, for all integers $i$. From the
definition of dimension of a complex, we get $$0=\dim_RX\geq \dim_R(\text{H}_{t}(X))-t,$$ and so $t\geq 0$.

Our assumptions implies that $\dim_R(\text{H}_i(X))\leq i$ for all $i<n$ and $\dim_R(\text{H}_n(X))=n$. In particular, $n\leq
\dim R$. Thus, by Lemma \ref{22}, we have $\gamma_n(F)>0$ and $\gamma_{n-1}(F)>0$. This yields that $$\beta^R_n(X)=\gamma_n(F)
+\gamma_{n-1}(F)\geq 2.$$ We arrive at a contradiction.
\end{proof}

For every semidualizing complex $C$ for a local ring $R$, Christensen \cite[Corollary 3.4]{C1} has proved the inequality
$\cmd R\leq\cmd_RC+\amp C$. Part (c) of the next result shows that it is an equality, provided $\Ass R$ is a singleton.

\begin{lemma}\label{28} Let $(R,\fm,k)$ be a local ring and $C$ a semidualizing complex.
\begin{itemize}
\item[(a)] $\Ass_R(\text{H}_{\sup C}(C))=\{\fp\in \Ass R \mid \inf C_{\fp}=\sup C\}$.
\item[(b)] $\Assh_RC\subseteq \Ass R$.
\item[(c)] If $\Ass R$ is a singleton, then $\dim_RC=\dim R-\sup C$ and $\cmd R=\cmd_RC+\amp C$.
\end{itemize}
\end{lemma}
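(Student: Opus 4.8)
The plan is to obtain all three parts from a single computation of $\Assh$, after reducing to the complete case and invoking the dagger duality of Remark \ref{24}. The engine is the depth formula for a semidualizing complex: for every $\fp\in\Spec R$ one has $\depth R_\fp=\depth_{R_\fp}C_\fp+\inf C_\fp$. Indeed $C_\fp$ is again semidualizing over $R_\fp$ by \cite[Corollary 2.12]{C1}, so $R_\fp\simeq{\bf R}\Hom_{R_\fp}(C_\fp,C_\fp)$; applying ${\bf R}\Hom_{R_\fp}(k(\fp),-)$ and adjunction gives ${\bf R}\Hom_{R_\fp}(k(\fp),R_\fp)\simeq{\bf R}\Hom_{R_\fp}(k(\fp)\otimes^{\bf L}_{R_\fp}C_\fp,C_\fp)$, and taking suprema (the right-hand supremum is attained in the lowest Betti degree $\inf C_\fp$) yields $-\depth R_\fp=-\depth_{R_\fp}C_\fp-\inf C_\fp$. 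Since $\Ass$, the numbers $\inf C_\fp$, and $\dim_RC$ are compatible with the faithfully flat map $R\to\widehat R$, I may assume $R$ complete and fix a normalized dualizing complex $D$ with $(-)^{\dag}={\bf R}\Hom_R(-,D)$.

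Next I record the master formula $\inf(U^{\dag})_\fp=\dim R/\fp+\depth_{R_\fp}U_\fp$, valid for every semidualizing $U$ and every $\fp$: localizing $D$ gives $D_\fp\simeq\Sigma^{\dim R/\fp}D^{\circ}$ with $D^{\circ}$ a normalized dualizing complex of $R_\fp$ \cite[Chapter V]{H}, so $(U^{\dag})_\fp$ is $\Sigma^{\dim R/\fp}$ of the dagger dual of $U_\fp$ over $R_\fp$, and the claim follows from Remark \ref{24}(b) over $R_\fp$. Applying this to $U^{\dag}$ (so that $\inf U_\fp=\dim R/\fp+\depth_{R_\fp}(U^{\dag})_\fp$) and using $\dim_RU=\sup U^{\dag}$, membership $\fp\in\Assh_RU$ reads $\depth_{R_\fp}(U^{\dag})_\fp=-\sup U^{\dag}$; the depth formula rewrites this as $\inf(U^{\dag})_\fp=\depth R_\fp+\sup U^{\dag}$, and since $\inf(U^{\dag})_\fp\le\sup(U^{\dag})_\fp\le\sup U^{\dag}$ this forces $\depth R_\fp=0$ and $\inf(U^{\dag})_\fp=\sup U^{\dag}$. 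Hence $\Assh_RU=\{\fp\in\Ass R:\inf(U^{\dag})_\fp=\sup U^{\dag}\}\subseteq\Ass R$. Taking $U=C$ is exactly part (b); taking $U=C^{\dag}$ gives $\Assh_R(C^{\dag})=\{\fp\in\Ass R:\inf C_\fp=\sup C\}$, which I will use for (a).

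For part (a) I first pass from top homology of $C$ to top local cohomology of $C^{\dag}$. By the Local Duality theorem \cite[Chapter V, Theorem 6.2]{H} and $(C^{\dag})^{\dag}\simeq C$ (Remark \ref{24}(a)), ${\bf R}\Gamma_{\fm}(C^{\dag})\simeq{\bf R}\Hom_R(C,\text{E}_R(R/\fm))$, so $\text{H}^i_{\fm}(C^{\dag})\cong\Hom_R(\text{H}_i(C),\text{E}_R(R/\fm))$ for all $i$. As $\sup C=\dim_RC^{\dag}$, the module $\text{H}_{\sup C}(C)$ is the Matlis dual of the top local cohomology $\text{H}^{\dim_RC^{\dag}}_{\fm}(C^{\dag})$, whence $\Ass_R(\text{H}_{\sup C}(C))=\Att_R(\text{H}^{\dim_RC^{\dag}}_{\fm}(C^{\dag}))$. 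I then invoke the description of the attached primes of the top local cohomology of a complex — the complex-level analogue of the computation used for Lemma \ref{25}(b) — to get $\Att_R(\text{H}^{\dim_RC^{\dag}}_{\fm}(C^{\dag}))=\Assh_R(C^{\dag})=\{\fp\in\Ass R:\inf C_\fp=\sup C\}$, which is (a). (The inclusion $\supseteq$ is in any case directly visible from the depth formula and concentration of $C_\fp$.) The hard part is exactly this attached-primes identification in the non-Cohen–Macaulay case; establishing it requires the spectral sequence $\text{H}^p_{\fm}(\text{H}_q(C^{\dag}))\Rightarrow\text{H}^{p-q}_{\fm}(C^{\dag})$ together with a check that the extreme corner contributing to degree $\dim_RC^{\dag}$ is not cancelled.

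Finally part (c) is formal. Assume $\Ass R=\{\fp_0\}$, so $\fp_0$ is the unique minimal prime and $\dim R=\dim R/\fp_0$. Since $\text{H}_{\sup C}(C)\neq0$, part (a) gives $\Ass_R(\text{H}_{\sup C}(C))=\{\fp_0\}$ and $\inf C_{\fp_0}=\sup C$. As $\dim_RC$ is attained, $\Assh_RC\neq\emptyset$, and by (b) it is contained in $\{\fp_0\}$; thus $\Assh_RC=\{\fp_0\}$ and $\dim_RC=\dim R/\fp_0-\inf C_{\fp_0}=\dim R-\sup C$. Substituting this into $\cmd_RC+\amp C=(\dim_RC-\depth_RC)+(\sup C-\inf C)=\dim R-\depth_RC-\inf C$ and applying the depth formula $\depth R=\depth_RC+\inf C$ at $\fp=\fm$ gives $\cmd_RC+\amp C=\dim R-\depth R=\cmd R$, completing the proof.
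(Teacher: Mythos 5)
Your proposal is correct in substance, and it organizes the argument in a way that is essentially dual to the paper's. The paper dispatches part (a) in one line by citing \cite[Corollary A.7]{C1} ($\fp\in\Ass_R(\text{H}_{\sup C}(C))$ iff $\depth R_{\fp}=0$ and $\inf C_{\fp}=\sup C$), and then spends all its effort on part (b): it reduces to the complete case via the Artinianness of $\text{H}_{\fm}^{d}(C)$ \cite[Proposition 2.1]{HD}, identifies $\Assh_RC=\Att_R(\text{H}_{\fm}^{d}(C))$ by \cite[Lemma 2.5]{HD}, converts this by Local Duality into $\Ass_R(\text{H}_d(C^{\dag}))$, and finally applies part (a) to the semidualizing complex $C^{\dag}$. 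You do the opposite: your master formula $\inf (U^{\dag})_{\fp}=\dim R/\fp+\depth_{R_{\fp}}U_{\fp}$, combined with the localized depth formula $\depth R_{\fp}=\depth_{R_{\fp}}C_{\fp}+\inf C_{\fp}$, yields part (b) by a purely formal computation --- no local cohomology, no Artinianness, no attached primes --- which is arguably cleaner and more self-contained than the paper's proof of (b). Your part (c) coincides with the paper's. Both treatments rest on the same two pillars (the depth formula of \cite[Corollary 3.2]{C1} and dagger duality as in Remark \ref{24}); the difference is where the local-cohomology machinery is allowed to enter.

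The one soft spot is your part (a). You route it through Local Duality and the identification $\Att_R(\text{H}_{\fm}^{\dim_RX}(X))=\Assh_RX$, which you yourself flag as ``the hard part'' and only sketch via a spectral-sequence corner argument. As written this is a gap, but it is a gap of citation rather than of mathematics: the identification you need is exactly \cite[Lemma 2.5]{HD}, which is in the paper's bibliography and is precisely what the paper invokes (in its proof of (b)). Better still, you do not need local cohomology for (a) at all, since your own depth formula closes it: the corner isomorphism $\Ext^{-\sup C_{\fp}}_{R_{\fp}}(k(\fp),C_{\fp})\cong\Hom_{R_{\fp}}(k(\fp),\text{H}_{\sup C_{\fp}}(C_{\fp}))$ shows that $\fp\in\Ass_R(\text{H}_{\sup C}(C))$ iff $\sup C_{\fp}=\sup C$ and $\depth_{R_{\fp}}C_{\fp}=-\sup C$; substituting into $\depth R_{\fp}=\depth_{R_{\fp}}C_{\fp}+\inf C_{\fp}$ and using $\inf C_{\fp}\le\sup C_{\fp}\le\sup C$ forces $\depth R_{\fp}=0$ and $\inf C_{\fp}=\sup C$, and the converse is the computation you already noted parenthetically. (This is, in effect, a re-proof of \cite[Corollary A.7]{C1}, the citation the paper uses.) Finally, your reduction to the complete case is asserted rather than proved: the compatibility of $\Assh_RC$ with completion is true but requires the standard argument that a prime $\fP$ minimal over $\fp\widehat{R}$ with $\dim \widehat{R}/\fP=\dim R/\fp$ contracts to $\fp$; the paper sidesteps this by performing the reduction through attached primes of $\text{H}_{\fm}^{d}(C)$, which behave well under the $\widehat{R}$-structure.
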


\begin{proof} Set $n:=\sup C$ and $d:=\dim_RC$.

(a) By \cite[Corollary A.7]{C1}, a prime ideal $\fp$ of $R$ belongs to $\fp\in \Ass_R(\text{H}_{n}(C))$ if and only
$\depth R_{\fp}=0$ and $\inf C_{\fp}=n$. But, for a prime ideal $\fp$ of $R$, it is evident that $\fp\in \Ass R$
if and only if $\depth R_{\fp}=0$. Thus, $$\Ass_R(\text{H}_{n}(C))=\{\fp\in \Ass R \mid \inf C_{\fp}=n\}.$$

(b) Let $A$ be an Artinian $R$-module. Then $A$ can be given a natural structure as an Artinian $\widehat{R}$-module, and
so $$\Att_RA=\{\fp\cap R\mid \fp\in \Att_{\widehat{R}}A\}.$$  Also, we know that there is a natural $\widehat{R}$-isomorphism  $A\otimes_R\widehat{R}\cong A$. By \cite[Proposition 2.1]{HD}, the $R$-module $\text{H}_{\fm}^d(C)$ is Artinian. This and
\cite[Corollary 3.4.4]{Li} yield, respectively, the following $\widehat{R}$-isomorphisms: $$\text{H}_{\fm}^d(C)\cong
\text{H}_{\fm}^d(C)\otimes_R\widehat{R}\cong \text{H}_{\widehat{\fm}}^d(C\otimes_R\widehat{R}).$$ By \cite[Lemma 2.6]{C1},
it follows that $C\otimes_R\widehat{R}$ is a semidualizing complex for the local ring $\widehat{R}$ and we may check that $\dim_{\widehat{R}}(C\otimes_R\widehat{R})=\dim_RC$. Thus, by \cite[Lemma 2.5]{HD}, we deduce that
$$\begin{array}{ll}
\Assh_RC&=\Att_R(\text{H}_{\fm}^d(C))\\
&=\{\fp\cap R\mid \fp\in \Att_{\widehat{R}}(\text{H}_{\widehat{\fm}}^d(C\otimes_R\widehat{R}))\}\\
&=\{\fp\cap R\mid \fp\in \Assh_{\widehat{R}}(C\otimes_R\widehat{R})\}.
\end{array}$$
On the other hand, it is known that $$\Ass R=\{\fp\cap R\mid \fp\in \Ass \widehat{R}\}.$$ Therefore, we may and do assume that $R$
is complete. Then $R$ possesses a normalized dualizing complex $D$. Now, the Local Duality theorem \cite[Chapter V, Theorem 6.2]{H}
implies that $$\text{H}_{\fm}^d\left(C\right)\cong \Hom_R\left(\text{H}_d(C^\dag),\text{E}_R(R/\fm)\right),$$ and so $$\Assh_RC=\Att_R(\text{H}_{\fm}^d(C))=\Ass_R(\text{H}_d(C^\dag)).$$ By parts (f) and (b) of Remark \ref{24}, $C^\dag$ is a semidualizing
complex for $R$ with $\sup C^\dag=d$. Thus, (a) yields that $\Assh_RC\subseteq \Ass R$.

(c) Let $\fp$ be the unique member of $\Ass R$. Then, by virtue of (b) and (a), we have $$\Assh_RC=\{\fp\}=
\Ass_R(\text{H}_{n}(C)).$$ Hence, $$\dim_RC=\dim R/\fp-\inf C_{\fp}=\dim R-n.$$ By \cite[Corollary 3.2]{C1}, it turns out
that $$\depth R=\depth_RC+\inf C.$$ This implies that
$$\begin{array}{ll}
\cmd_RC+\amp C&=\dim R-n-\depth_RC+n-\inf C\\
&=\dim R-\depth R\\
&=\cmd R.
\end{array}$$
\end{proof}

Now, we are ready to prove our first theorem.

\begin{theorem}\label{29} Let $(R,\fm,k)$ be a local ring. Let $X\in \mathcal{D}^f_{\Box}(R)$ be a non-exact complex with
$\beta^R_{\sup X}(X)=1$.
\begin{itemize}
\item[(a)] Assume that $X$ is Cohen-Macaulay. Then $X\simeq \Sigma^{\sup X}R/\Ann_RX$.
\item[(b)] Assume that $\Ass R$ is a singleton and $X$ is a semidualizing complex for $R$. Then $X\simeq \Sigma^{\sup X}R$.
\end{itemize}
\end{theorem}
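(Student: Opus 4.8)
The plan is to prove both parts by reducing everything to the single assertion $\amp X=0$, and then reading off the structure of $X$. Set $n:=\sup X$ and $d:=\dim_RX$. Once $\amp X=0$ is known, part (a) is immediate from Lemma \ref{26}, which gives $X\simeq \Sigma^{n}R/\Ann_RX$. For part (b), $\amp X=0$ forces $X\simeq \Sigma^{n}M$ with $M:=\text{H}_n(X)$ a semidualizing module; since $\beta^R_0(M)=\beta^R_n(X)=1$, the module $M$ is cyclic, say $M\cong R/\fa$, and the homothety map $R\lo \Hom_R(R/\fa,R/\fa)\cong R/\fa$ being an isomorphism forces $\fa=0$, so $M\cong R$ and $X\simeq \Sigma^{n}R$. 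Thus the whole theorem hinges on showing $\amp X=0$.

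To prove $\amp X=0$ I would first record the dimension data needed. By Lemma \ref{25}(a), $\dim_R(\text{H}_i(X))\le i+d$ for all $i\le n$, and, crucially, $\dim_R(\text{H}_n(X))=n+d$ \emph{exactly}. In case (a) this top value is precisely Lemma \ref{25}(b). In case (b) it comes from Lemma \ref{28}: since $\Ass R=\{\fp\}$ is a singleton, $\fp$ is the unique minimal prime, $\dim_RX=\dim R-n$ by Lemma \ref{28}(c), and Lemma \ref{28}(a) forces $\fp\in \Ass_R(\text{H}_n(X))$, whence $\dim_R(\text{H}_n(X))=\dim R/\fp=\dim R=n+d$.

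With this in hand I would run the counting argument of Lemma \ref{27}, now shifted by $d$. Suppose, for contradiction, that $t:=\inf X<n$, and let $F=\cdots\lo F_i\lo \cdots \lo F_t\lo 0$ be the minimal free resolution of $X$, so that $\beta^R_i(X)=\rank F_i$ and $\beta^R_n(X)=\rank F_n=\gamma_n(F)+\gamma_{n-1}(F)$. It then suffices to show $\gamma_n(F)>0$ and $\gamma_{n-1}(F)>0$, since this forces $\beta^R_n(X)\ge 2$. Writing $Z_j$ for the kernel of $F_j\lo F_{j-1}$: because $\text{H}_n(X)$ is a quotient of $Z_n$ and $\dim_R(\text{H}_n(X))=n+d$, we get $\dim_RZ_n\ge n+d$; localizing at a prime $\fp\in \Supp_RZ_n$ with $\dim R/\fp\ge n+d$, the bounds $\dim_R(\text{H}_i(X))\le i+d<n+d$ for $i\le n-1$ make the localized truncation exact, so $(Z_n)_{\fp}$ is free of rank $\gamma_n(F)$ and nonzero, giving $\gamma_n(F)>0$. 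For $\gamma_{n-1}(F)$ one instead applies Lemma \ref{21} to the truncation at the spot $n-1$, with its parameter raised from $t$ to $t+d$: were $\dim_RZ_{n-1}<(n-1)+d$, Lemma \ref{21} would give $\dim R\le (n-1)+d$, contradicting $\dim R\ge \dim_R(\text{H}_n(X))=n+d$; hence $\dim_RZ_{n-1}\ge (n-1)+d$ and the same localization argument yields $\gamma_{n-1}(F)>0$.

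The main obstacle is exactly this last step. In Lemma \ref{27} the hypotheses $\dim_RX=0$ and $\dim_R(\text{H}_n(X))=n$ allow one to invoke Lemma \ref{22} verbatim, whereas here the uniform shift by $d=\dim_RX$ must be carried through Lemma \ref{21}, and the strict positivity of the two consecutive partial Euler characteristics $\gamma_n(F)$ and $\gamma_{n-1}(F)$ depends on knowing the top dimension \emph{exactly}, $\dim_R(\text{H}_n(X))=n+d$, rather than merely $\le n+d$. This is why Lemma \ref{25}(b) (for part (a)) and Lemma \ref{28} (for part (b)) are the load-bearing inputs, while the base case $d=0$ is nothing but Lemma \ref{27} itself.
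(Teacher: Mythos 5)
Your proposal is correct: the dimension bookkeeping in both cases, the use of Lemma \ref{25}(b) (for (a)) and Lemma \ref{28}(a),(c) (for (b)) to pin down $\dim_R(\mathrm{H}_{\sup X}(X))$ exactly, the $\gamma_n(F)+\gamma_{n-1}(F)=\beta^R_n(X)\geq 2$ contradiction, and the final identification steps all go through. The only real difference from the paper is organizational: where you carry the shift $d=\dim_RX$ explicitly through the counting argument (in effect reproving Lemmas \ref{22} and \ref{27} with the parameter in Lemma \ref{21} raised by $d$), the paper instead normalizes first, replacing $X$ by $\Sigma^{\dim_RX}X$ in part (a) and by $\Sigma^{\dim R-\sup X}X$ in part (b). Since suspension changes $\sup$, $\dim_R$, and the indexing of Betti numbers in lockstep, and preserves Cohen--Macaulayness and the semidualizing property, this reduces both parts to the case $\dim_RX=0$, where Lemma \ref{27} applies verbatim as a black box. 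So the paper's proof is three lines per part, while yours duplicates the internals of Lemma \ref{27}; what your version buys is an explicit check that the lemma's argument survives the dimension shift (with the minor point, which you should verify, that the parameter $t+d$ fed into Lemma \ref{21} is nonnegative because $d\geq \dim_R(\mathrm{H}_{\inf X}(X))-\inf X\geq -\inf X$), but this generality is unnecessary once one notices that shifting is harmless.
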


\begin{proof} (a) By replacing $X$ with $\Sigma^{\dim_RX}X$, we may further assume that $\dim_RX=0$. Then, by Lemma \ref{25},
we have $\dim_R(\text{H}_{\sup X}(X))=\sup X$. Now, Lemma \ref{27} yields the claim.

(b) By replacing $X$ with $\Sigma^{\dim R-\sup X}X$, we may and do assume that $\sup X=\dim R$. Then, Lemma \ref{28}(c) implies
that $\dim_RX=0$. On the other hand, by Lemma \ref{28}(a), we deduce that $$\dim_R(\text{H}_{\sup X}(X))=\dim R=\sup X.$$ Now,
Lemma \ref{27} yields that $X\simeq \Sigma^{\sup X}R/\Ann_RX$. But, then $R/\Ann_RX$ would be a semidualizing module for $R$,
which in turns implies that $\Ann_RX=0$.
\end{proof}

The next result provides an affirmative answer to a conjecture raised by Foxby \cite[Conjecture B]{F1}.

\begin{corollary}\label{210} Let $(R,\fm,k)$ be a local ring. Let $M$ be a non-zero finitely generated $R$-module of type one.
Then $M$ is the dualizing module for the local ring $R/\text{Ann}_RM$. In particular, the local ring $R/\Ann_RM$ is Cohen-Macaulay.
\end{corollary}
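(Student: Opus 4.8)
The plan is to reduce the statement to Theorem \ref{29}(a) by passing to the dagger dual. First I would reduce to the case where $R$ is complete: the type $\mu^{\dim_RM}_R(M)$ and the dimension $\dim_RM$ are unchanged under completion, $\Ann_{\widehat{R}}\widehat{M}=(\Ann_RM)\widehat{R}$, and both Cohen-Macaulayness of the quotient and the property of being a dualizing module descend along the faithfully flat map $R\to\widehat{R}$. So I may assume $R$ is complete and fix a normalized dualizing complex $D$, together with the dagger duality $(-)^{\dag}={\bf R}\Hom_R(-,D)$ of Remark \ref{24}.

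Set $n:=\dim_RM$, so the hypothesis reads $\mu^n_R(M)=1$. By Remark \ref{24}(b) we have $\sup M^{\dag}=\dim_RM=n$, whence Remark \ref{24}(d) gives $\beta^R_{\sup M^{\dag}}(M^{\dag})=\beta^R_n(M^{\dag})=\mu^n_R(M)=1$. The complex $M^{\dag}$ is non-exact since the dagger duality is an equivalence and $M\neq 0$, and it is Cohen-Macaulay: by Remark \ref{24}(c) this amounts to $\amp(M^{\dag})^{\dag}=\amp M=0$, which holds because $M$ is a module. Thus $M^{\dag}$ satisfies the hypotheses of Theorem \ref{29}(a), yielding $M^{\dag}\simeq\Sigma^{n}R/\Ann_RM^{\dag}$.

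In particular $\amp M^{\dag}=0$, so Lemma \ref{25}(c) applies to $M$ and gives $\Ann_RM^{\dag}=\Ann_RM$. Writing $S:=R/\Ann_RM$, we obtain $M^{\dag}\simeq\Sigma^{n}S$. Dualizing once more and using $(M^{\dag})^{\dag}\simeq M$, I would conclude $\Sigma^{n}M\simeq{\bf R}\Hom_R(S,D)$. The key external input here is the base-change formula for dualizing complexes along the surjection $R\twoheadrightarrow S$: the complex ${\bf R}\Hom_R(S,D)$ is a dualizing complex for $S$. Hence $\Sigma^{n}M$ is a dualizing complex for $S$ concentrated in the single degree $n$, so its amplitude is $0$; since the amplitude of a dualizing complex equals $\cmd S$, the ring $S$ is Cohen-Macaulay. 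Finally, the normalized dualizing complex $\Sigma^{\dim S-n}(\Sigma^nM)=\Sigma^{\dim S}M$ of $S$ being a shift of the module $M$ says exactly that $M$ is the dualizing module for $S$.

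The routine parts (non-vanishing of $M^{\dag}$, the completion descent, and the base-change of dualizing complexes along $R\twoheadrightarrow S$) are standard. The step I expect to require the most care is the final one: translating ``${\bf R}\Hom_R(S,D)$ is a dualizing complex for $S$ concentrated in a single degree'' into the two asserted conclusions, namely Cohen-Macaulayness of $S$ (via $\amp=\cmd$) and the identification of $M$ with the canonical module (via the normalization of the dualizing complex and the $S$-linearity of the isomorphism $\Sigma^{n}M\simeq{\bf R}\Hom_R(S,D)$).
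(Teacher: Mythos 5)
Your proposal is correct and follows essentially the same route as the paper's own proof: reduce to the complete case, pass to the dagger dual $M^{\dag}$, use Remark \ref{24} to see it is Cohen-Macaulay with top Betti number one, apply Theorem \ref{29}(a), identify annihilators via Lemma \ref{25}(c), and dualize back using that ${\bf R}\Hom_R(R/\Ann_RM,D)$ is a dualizing complex for $R/\Ann_RM$. The only difference is that you spell out the completion descent and the final normalization step, which the paper leaves implicit.
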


\begin{proof}  Without loss of generality, we may and do assume that $R$ is complete. Then $R$ possesses a normalized dualizing
complex $D$. Set $X:=M^{\dag}$ and $n:=\dim_RM$. By using parts (a),(b) and (c) of Remark \ref{24}, we can see that $X$ is a
Cohen-Macaulay complex and $\sup X=n$. By Remark \ref{24}(d), we conclude that $\beta^R_n(X)=1$, and so Theorem \ref{29}(a)
yields that $X\simeq \Sigma^n R/\Ann_RM$. Note that Lemma \ref{25}(c) implies that $\Ann_RX=\Ann_RM$.

As $$(R/\Ann_RM)^{\dag}={\bf R}\Hom_R(R/\Ann_RM,D)$$ is a dualizing complex for the local ring $R/\Ann_RM$ and $$M\simeq (M^{\dag})
^{\dag}\simeq X^{\dag}\simeq \Sigma^{-n}(R/\Ann_RM)^{\dag},$$ it follows that $M$ is the dualizing module for the local ring
$R/\text{Ann}_RM$.
\end{proof}

Although the next result is known to the experts, we give a proof for the sake of completeness.

\begin{lemma}\label{211} Let $(R,\fm,k)$ be a local ring. Let $$F=\cdots\lo F_i\overset{\partial^F_i}\lo F_{i-1}\lo \cdots
\overset{\partial^F_{t+1}}\lo F_t\lo 0$$ be a complex of finitely generated free $R$-modules such that $\im  \partial^F_i
\subseteq \fm F_{i-1}$ for all integers $i>t$.
\begin{enumerate}
\item[(a)] If $F\simeq 0$, then $F_i=0$ for all $i\geq t$.
\item[(b)] Assume that $F$ is indecomposable in  $\mathcal{D}(R)$ and $\partial^F_j$ is zero for some $j>t$.
Then $F_i=0$ either for all $i\geq j$ or for all $i\leq j-1$.
\end{enumerate}
\end{lemma}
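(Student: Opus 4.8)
The plan is to treat (a) first, since (b) will reduce to it. For (a), I would argue directly by upward induction from the bottom degree $t$, using Nakayama's lemma. Since $F\simeq 0$ means $F$ is exact, exactness at the bottom spot gives $\ker \partial^F_t=F_t=\im \partial^F_{t+1}$; but the minimality hypothesis $\im \partial^F_{t+1}\subseteq \fm F_t$ then forces $F_t=\fm F_t$, whence $F_t=0$ by Nakayama. Having killed $F_t,\dots,F_{i-1}$, the differential $\partial^F_i$ becomes the zero map (its target is already $0$), so $\ker \partial^F_i=F_i$ and exactness at spot $i$ yields $F_i=\im \partial^F_{i+1}\subseteq \fm F_i$, and Nakayama again gives $F_i=0$. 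Since every fixed $i\ge t$ is reached after finitely many steps, this proves $F_i=0$ for all $i\ge t$. (Alternatively, one can note that $F$, being a bounded-below complex of frees, is K-flat, so $k\otimes_R F$ computes $k\otimes_R^{{\bf L}}F\simeq 0$; as all its differentials vanish by minimality, $F_i/\fm F_i=\text{H}_i(k\otimes_R F)=0$.)

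For (b), the idea is that a vanishing differential lets the complex break into two pieces. Since $\partial^F_j=0$, I would set
$$F'=\bigl(\cdots\lo F_{j+1}\lo F_j\lo 0\bigr)\quad\text{and}\quad F''=\bigl(0\lo F_{j-1}\lo\cdots\lo F_t\lo 0\bigr),$$
so that $F'_i=F_i$ for $i\ge j$ and $F''_i=F_i$ for $i\le j-1$. Because the only differential of $F$ crossing between degrees $\ge j$ and $\le j-1$ is $\partial^F_j$, which is zero, these are honest subcomplexes and $F=F'\oplus F''$ as $R$-complexes, hence also in $\mathcal{D}(R)$. Each of $F'$ and $F''$ is again a complex of finitely generated free modules inheriting the minimality condition $\im\partial\subseteq\fm F_{\bullet}$. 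Indecomposability of $F$ in $\mathcal{D}(R)$ then forces $F'\simeq 0$ or $F''\simeq 0$, and applying part (a) to whichever piece vanishes gives $F_i=0$ for all $i\ge j$ (if $F'\simeq 0$) or $F_i=0$ for all $i\le j-1$ (if $F''\simeq 0$), which is exactly the dichotomy asserted.

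The only point that needs care---and the step I would flag as the \emph{crux}---is the bookkeeping around the splitting: one must check that after deleting the spot-$j$ differential the two truncations really are subcomplexes closed under $\partial^F$, that the minimality hypothesis descends to each summand (so that part (a) applies, in particular to $F'$, whose new bottom degree is $j$), and that the direct-sum decomposition of complexes survives passage to $\mathcal{D}(R)$. None of this is deep, but it is precisely where the hypotheses ``$\partial^F_j=0$'' and ``$F$ indecomposable'' are used, so I would make the decomposition $F=F'\oplus F''$ explicit rather than merely gesture at it.
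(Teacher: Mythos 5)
Your proof is correct, and for part (b) it is exactly the paper's argument: split $F$ at the zero differential into the two truncations, invoke indecomposability in $\mathcal{D}(R)$ to kill one summand, and apply part (a). The difference is in part (a). The paper proves it by first showing $F$ is split-exact: it breaks the acyclic complex into short exact sequences, observes inductively that each $\im\partial^F_i$ is projective so that every sequence splits, concludes that $k\otimes_R F$ is split-exact, and only then uses minimality plus Nakayama to get $k\otimes_R F_i=0$ and hence $F_i=0$. You instead run a direct upward induction from degree $t$: exactness at the bottom gives $F_t=\im\partial^F_{t+1}\subseteq\fm F_t$, so $F_t=0$ by Nakayama, and once $F_t,\dots,F_{i-1}$ vanish, exactness at spot $i$ gives $F_i=\im\partial^F_{i+1}\subseteq\fm F_i$ and Nakayama again. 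This is shorter and more elementary --- it never needs projectivity of the images or the splitting of the sequences --- while the paper's route yields the stronger intermediate fact that an acyclic bounded-below complex of frees is contractible, which is what licenses applying $k\otimes_R-$ degreewise. Your parenthetical alternative (K-flatness, so $k\otimes_R F$ computes $k\otimes_R^{\bf L}F\simeq 0$, then minimality forces $F_i/\fm F_i=0$) is essentially the paper's argument with the splitting step replaced by a derived-category justification; either version is fine. Your closing care about the bookkeeping in (b) --- that the truncations are genuine subcomplexes inheriting minimality and that the direct sum persists in $\mathcal{D}(R)$ --- is precisely what the paper's terse ``clearly $F\cong Y\oplus Z$'' glosses over, so making it explicit is a reasonable choice.
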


\begin{proof} (a) We can split $F$ into the short exact sequences
$$0\longrightarrow \im \partial ^{F}_{t+2}\hooklongrightarrow F_{t+1}\longrightarrow F_t\longrightarrow 0$$ $$\vdots$$
$$0\longrightarrow \im \partial ^{F}_{i+2}\hooklongrightarrow F_{i+1}\longrightarrow  \im \partial ^{F}_{i+1}
\longrightarrow 0$$ $$\vdots$$ Then, it turns out that the $R$-module $\im \partial^{F}_{i}$ is projective for all $i>t+1$
and all of the above short exact sequences are split. Thus $F$ is split-exact, and so the complex $k\otimes _RF$ is split-exact
too. Since, $\im \partial^F_{i}\subseteq \fm F_{i-1}$ for all $i>t$, it follows that all differential maps of the
complex $k\otimes_R F$ are zero. Hence, $$0=\operatorname{H}_i (k\otimes_R F)=k\otimes_R F_{i}$$ for all $i\geq t$.
Now, by Nakayama's lemma, $F_i=0$ for all $i\geq t$.

(b) Set $$Y :=\dots \longrightarrow F_l \longrightarrow F_{l-1} \longrightarrow \dots \longrightarrow F_j\longrightarrow 0$$
and $$Z :=0\longrightarrow F_{j-1}\longrightarrow F_{j-2}\longrightarrow \dots \longrightarrow F_t\longrightarrow 0.$$ Then,
clearly we have $F\cong Y\oplus Z $. So, either $Y\simeq 0$ or $Z\simeq 0$. Now, the claim follows by $(a)$.
\end{proof}

Next, we prove our second theorem.

\begin{theorem}\label{212} Let $(R,\fm,k)$ be a local domain and $C$ a semidualizing complex. If $\beta^R_{\sup C}(C)=2$, then
$\amp C=0$.
\end{theorem}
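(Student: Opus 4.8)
The plan is to argue by contradiction, following the template of Lemma \ref{27} but feeding in the domain hypothesis at one decisive generic-point step. First I would move to a normalized position. A shift preserves the semidualizing property, keeps $R$ a domain, leaves $\amp C$ and the value $\beta^R_{\sup C}(C)=2$ unchanged, so replacing $C$ by $\Sigma^{\dim_RC}C$ I may assume $\dim_RC=0$. Since $R$ is a domain, $\Ass R=\{(0)\}$ is a singleton, so Lemma \ref{28}(c) gives $\sup C=\dim R=:n$. Localizing at the generic point, $C_{(0)}$ is a semidualizing complex over the field $K:=R_{(0)}$ (semidualizing-ness localizes), and over a field every semidualizing complex is a single shift of the field; comparing suprema and using $\dim_RC=0$ forces $C_{(0)}\simeq\Sigma^nK$. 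In particular $\inf C_{(0)}=n$, so Lemma \ref{28}(a) puts $(0)\in\Ass_R(\text{H}_n(C))$ and hence $\dim_R(\text{H}_n(C))=n$.

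Now suppose, toward a contradiction, that $\amp C\neq 0$, that is $t:=\inf C<n$. From $\dim_RC=0$ one gets $t\geq 0$ and $\dim_R(\text{H}_i(C))\leq i$ for all $t\leq i\leq n$. Let $F$ be the minimal free resolution of $C$, so that $\operatorname{rank}F_i=\beta^R_i(C)$ and $\partial^F_i$ has image in $\fm F_{i-1}$. Applying Lemma \ref{22} to $F$ (legitimate since $t\leq\dim R=n$): the equality $\dim_R(\text{H}_n(C))=n$ forces $\gamma_n(F)>0$, and $n-1<\dim R$ forces $\gamma_{n-1}(F)>0$. Because $\gamma_n(F)+\gamma_{n-1}(F)=\operatorname{rank}F_n=\beta^R_n(C)=2$, I conclude $\gamma_n(F)=\gamma_{n-1}(F)=1$. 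This is exactly where the weaker hypothesis $\beta=2$ (rather than $\beta=1$) halts the argument of Lemma \ref{27}: there is no longer an immediate numerical contradiction, and the missing leverage must come from the domain assumption.

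The crux is to localize the resolution at $(0)$. Over $K$ the complex $F_{(0)}$ computes $C_{(0)}\simeq\Sigma^nK$, hence is exact in every degree below $n$; so, writing $Z_n$ for the kernel of $\partial^F_n$, the truncation argument in the proof of Lemma \ref{22}(a) shows $(Z_n)_{(0)}$ is a $K$-space of dimension $\gamma_n(F)=1$. But $\text{H}_n(C_{(0)})\cong K$ is one-dimensional, so the image of $\partial^F_{n+1}\otimes_RK$ must vanish. Since $R$ is a domain, $\im\partial^F_{n+1}$ is then a torsion submodule of the free, hence torsion-free, module $F_n$, forcing $\im\partial^F_{n+1}=0$, i.e. $\partial^F_{n+1}=0$. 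As $F$ is minimal, the summand of $F$ in degrees $>n$ is acyclic (its homology is $\text{H}_i(C)=0$ for $i>n$), so Lemma \ref{211}(a) gives $F_i=0$ for all $i>n$. Thus $F$ is bounded and $\pd_RC=n<\infty$.

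It remains to contradict ``$C$ semidualizing with $\pd_RC<\infty$''. I would use the tensor-evaluation isomorphism $\RHom_R(C,C)\otimes_R^{\mathbf{L}}k\simeq\RHom_R(C,C\otimes_R^{\mathbf{L}}k)$, available because $\pd_RC<\infty$; its left-hand side is $R\otimes_R^{\mathbf{L}}k\simeq k$. Decomposing the $k$-complexes $C\otimes_R^{\mathbf{L}}k\simeq\bigoplus_j\Sigma^jk^{\beta^R_j(C)}$ and $\RHom_R(C,k)\simeq\bigoplus_i\Sigma^{-i}k^{\beta^R_i(C)}$ and equating total homology dimensions gives $\big(\sum_i\beta^R_i(C)\big)^2=1$, whence $\sum_i\beta^R_i(C)=1$, contradicting $\beta^R_n(C)=2$. (Alternatively one may quote the known fact that a semidualizing complex of finite projective dimension is a shift of $R$, for which $\beta^R_{\sup}=1$.) This contradiction yields $t=n$, i.e. $\amp C=0$. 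As flagged above, the main obstacle is the generic-point computation of the third paragraph: converting the bare equalities $\gamma_n(F)=\gamma_{n-1}(F)=1$ into the vanishing of $\partial^F_{n+1}$, which is precisely where torsion-freeness over the domain is indispensable.
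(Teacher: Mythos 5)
Your proof is correct, but it takes a genuinely different route from the paper's. The paper disposes of the case $\pd_RC<\infty$ at the outset via \cite[Proposition 8.3]{C1}, and in the case $\pd_RC=\infty$ it never touches the New Intersection machinery: since $R\simeq {\bf R}\Hom_R(C,C)$ and $R$ is indecomposable, the minimal free resolution $F$ is indecomposable in $\mathcal{D}(R)$, so Lemma \ref{211}(b) shows $\partial^F_{n+1}\neq 0$; a direct rank count over the domain, namely $\rank_R(\ker \partial^F_{n})\geq \rank_R(\im \partial^F_{n+1})+\rank_R(\text{H}_n(C))\geq 2=\rank_R F_n$, forces $\rank_R(\im \partial^F_{n})=0$, hence $\partial^F_{n}=0$ by torsion-freeness, and Lemma \ref{211}(b) applied once more gives $F_i=0$ for all $i\leq n-1$, i.e. $\inf C=n$ directly. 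You instead kill the differential on the other side of $F_n$: after normalizing to $\dim_RC=0$ you use Lemma \ref{28}(c) and the New Intersection Lemma \ref{22} (the engine of Lemma \ref{27}) to pin down $\gamma_n(F)=\gamma_{n-1}(F)=1$, pass to the generic point to see that $\im\partial^F_{n+1}$ has rank zero and hence vanishes, deduce $\pd_RC<\infty$ from minimality via Lemma \ref{211}(a), and then contradict $\beta^R_n(C)=2$ by the classification of semidualizing complexes of finite projective dimension --- the same fact \cite[Proposition 8.3]{C1} the paper quotes, for which your tensor-evaluation computation is a correct self-contained substitute. Both arguments hinge on the identical domain trick (a rank-zero submodule of a free module is zero), but the paper's version is leaner: no normalization, no dimension-theoretic input from Lemmas \ref{21} and \ref{22}, and a direct rather than contradiction-based structure, with indecomposability as the key structural input. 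What your version buys is conceptual continuity: it exhibits the theorem as the natural $\beta=2$ continuation of Lemma \ref{27}, showing precisely where the numerical contradiction of that lemma breaks down and how the domain hypothesis steps in to replace it, at the cost of invoking heavier machinery.
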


\begin{proof} Set $n:=\sup C$. If $\pd_RC<\infty$, then \cite[Proposition 8.3]{C1} implies that $C\simeq R$, and so $\amp C=0$.
So, in the rest of the proof, we assume that $\pd_RC=\infty$.

Let $$F=\cdots\lo F_i\lo F_{i-1}\lo \cdots \lo F_t\lo 0$$ be the minimal free resolution of $C$. Then $F_i$ is a free $R$-module,
of finite rank $\beta^R_i(C)$, for all $i\geq t$. Since, $R$ is indecomposable and $R\simeq {\bf R}\Hom_R(C,C)$, it follows
that $C$ and, consequently, $F$ are indecomposable in $\mathcal{D}(R)$. Since $F_n\neq 0$, in view of Lemma \ref{211}(b), vanishing
of the map $\partial^F_{n+1}$ implies that $F_i=0$ for all $i>n$. So, as $\pd_RC=\infty$, we deduce that $\partial^F_{n+1}\neq 0$.

Next, we show that $\partial^F_{n}=0$. As $\im \partial^F_{n}$ is torsion-free, we equivalently prove that $\rank_R(\im \partial^F_{n})
=0$. Lemma \ref{28}(a) yields that $\Ass_R(\text{H}_{n}(C))\subseteq \Ass R$. Hence $\Ass_R(\text{H}_{n}(C))=\{0\}$, and so
$\text{H}_{n}(C)$ is torsion-free. As the $R$-module $\text{H}_{n}(C)$ is non-zero and torsion-free, it follows that
$\rank_R(\text{H}_{n}(C))\geq 1$. As, also, $\rank_R(\im \partial^F_{n+1})$ is positive, from the short exact sequence $$0\lo \im \partial^F_{n+1}\lo \ker \partial^F_{n}\lo \text{H}_n(F)\lo 0,$$ we get that $\rank_R(\ker \partial^F_{n})\geq 2$. Finally, in view
of the short exact sequence $$0\lo \ker \partial^F_{n}\lo F_n \lo \im \partial^F_{n} \lo 0,$$ we see that
$\rank_R(\im \partial^F_{n})=0$.

Now as $\partial^F_{n}=0$ and $F_n\neq 0$,  Lemma \ref{211}(b) yields that $F_i=0$ for all $i\leq n-1$. In particular,
$\inf C=\inf F=n$, and so $\amp C=0$.
\end{proof}

Each of parts (a) and (b) of the next result generalizes the main result of \cite{R}. Also, its part (c)
extends the main result of \cite{CHM}.

\begin{corollary}\label{213} Let $(R,\fm,k)$ be a local ring and $C$ a semidualizing complex for $R$.
\begin{enumerate}
\item[(a)] Assume that $C$ is a module of type one. Then $C$ is dualizing.
\item[(b)] Assume that $C$ has type one. Then $C$ is dualizing; provided either\\
$(1)$ $\Ass R$ is a singleton and $R$ admits a dualizing complex; or\\
$(2)$ $\Ass \widehat{R}$ is a singleton.
\item[(c)] Assume that $R$ is a domain admitting a dualizing complex and $C$ has type two. Then $C$ is Cohen-Macaulay.
\end{enumerate}
\end{corollary}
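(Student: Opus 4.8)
The plan is to exploit the dagger duality of Remark \ref{24} to convert statements about the \emph{type} of $C$ into statements about the top Betti number of its dagger dual, and then feed these into Theorems \ref{29} and \ref{212}. The recurring mechanism is the following: whenever the ground ring admits a normalized dualizing complex $D$, setting $X:=C^{\dag}$ gives a semidualizing complex by Remark \ref{24}(f), with $\sup X=\dim_RC$ by Remark \ref{24}(b); hence Remark \ref{24}(d), applied with $n=\dim_RC$, shows that the top Betti number $\beta^R_{\sup X}(X)=\mu^{\dim_RC}_R(C)$ is exactly the type of $C$. Thus ``type one'' translates to $\beta^R_{\sup X}(X)=1$ and ``type two'' to $\beta^R_{\sup X}(X)=2$, after which Theorem \ref{29}(b) or Theorem \ref{212} applies.

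For part (a) I cannot invoke Theorem \ref{29}(b), since here no hypothesis on $\Ass R$ is available. Instead I would argue directly: as $C$ is a semidualizing module, the homothety morphism $R\longrightarrow {\bf R}\Hom_R(C,C)$ is an isomorphism, hence injective in degree zero, which forces $\Ann_RC=0$. Corollary \ref{210} then says that $C$ is the dualizing module for the local ring $R/\Ann_RC=R$, and this is precisely the assertion that $C$ is dualizing for $R$.

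For part (b) I would reduce both subcases to one situation: a local ring $S$ with $\Ass S$ a singleton that \emph{admits} a dualizing complex, carrying a semidualizing complex $C_S$ of type one. In subcase $(1)$ take $S=R$ and $C_S=C$. In subcase $(2)$ take $S=\widehat{R}$ and $C_S=C\otimes_R\widehat{R}$, which is semidualizing for $\widehat{R}$ by \cite[Lemma 2.6]{C1}; here I would check that completion preserves the type, using $\mu^n_R(C)=\mu^n_{\widehat{R}}(C\otimes_R\widehat{R})$ together with $\dim_{\widehat{R}}(C\otimes_R\widehat{R})=\dim_RC$ (as noted in the proof of Lemma \ref{28}), and that a complete local ring always admits a normalized dualizing complex. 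Working over $S$, I would normalize the dualizing complex, put $X:=C_S^{\dag}$, and use the dictionary above to get $\beta^R_{\sup X}(X)=1$. Theorem \ref{29}(b) then yields $X\simeq \Sigma^{\sup X}S$, so by Remark \ref{24}(a) the complex $C_S\simeq X^{\dag}$ is a shift of the dualizing complex of $S$, hence dualizing for $S$. Finally I would descend: in subcase $(1)$ this is already the claim, while in subcase $(2)$ finiteness of injective dimension passes from $C\otimes_R\widehat{R}$ back to $C$ by faithful flatness of completion, so $C$ is dualizing for $R$.

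Part (c) follows the same template but is cleaner, since $R$ is already a domain, so $\Ass R=\{0\}$ is a singleton, and $R$ admits a dualizing complex by hypothesis, so no completion is needed. I would set $X:=C^{\dag}$, note that $\beta^R_{\sup X}(X)=2$ is the type of $C$, and apply Theorem \ref{212} to the semidualizing complex $X$ over the domain $R$ to conclude $\amp X=0$; by Remark \ref{24}(c) this is equivalent to $C$ being Cohen-Macaulay. The main obstacle throughout is the bookkeeping of the reduction in part (b): one must arrive at a ring admitting a dualizing complex while keeping $\Ass$ a singleton, and this is exactly why the two subcases are kept separate, since completing $R$ may enlarge $\Ass$, so subcase $(1)$ has to be carried out over $R$ itself rather than over $\widehat{R}$.
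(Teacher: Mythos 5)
Your proposal is correct and follows essentially the same route as the paper: part (a) via $\Ann_RC=0$ and Corollary \ref{210}, part (b) via dagger duality translating type one into $\beta^R_{\sup L}(L)=1$ for $L:=C^{\dag}$ and then Theorem \ref{29}(b), with subcase (2) reduced to the complete case (the paper justifies the descent of finite injective dimension by the formula $\id_RC=-\inf{\bf R}\Hom_R(k,C)=\id_{\widehat{R}}(C\otimes_R\widehat{R})$ from \cite[Proposition 5.5]{AF}, which is the same faithful-flatness mechanism you invoke), and part (c) via Theorem \ref{212} and Remark \ref{24}(c).
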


\begin{proof} (a) is immediate by  Corollary \ref{210}. Note that $\Ann_RC=\Ann_RR=0$.

By \cite[Lemma 2.6]{C1}, $C\otimes_R\widehat{R}$ is a semidualizing complex for $\widehat{R}$. It is easy to verify that the
$R$-complex $C$ and the $\widehat{R}$-complex $C\otimes_R\widehat{R}$ have the same type. On the other hand, by
\cite[Proposition 5.5]{AF}, we have
$$\begin{array}{ll}
\id_RC&=-\inf ({\bf R}\Hom_R(k,C))\\
&=-\inf ({\bf R}\Hom_R(k,C)\otimes_R\widehat{R})\\
&=-\inf ({\bf R}\Hom_{\widehat{R}}(k,C\otimes_R\widehat{R}))\\
&=\id_{\widehat{R}}(C\otimes_R\widehat{R}).
\end{array}
$$
So in part (2) of (b), we may also assume that $R$ possesses a dualizing complex. Let $D$ be a normalized dualizing complex for $R$. Set $n:=\dim_RC$. By Remark \ref{24}(f), $L:=C^{\dag}$ is also a semidualizing complex for $R$. By parts (b) and (d)
of Remark \ref{24}, we have $\sup L=n$ and $\beta^R_n(L)=\mu_R^n(C)$.

(b) We have $\beta^R_n(L)=1$ and $\Ass R$ is a singleton. Hence, Theorem \ref{29}(b) implies that $L\simeq \Sigma^{n}R$. So, by
Remark \ref{24}(a), we have $C\simeq \Sigma^{-n}D$.

(c) We have $\beta^R_n(L)=2$. Theorem \ref{212} yields that $\amp L=0$, and so, in view of Remark \ref{24}(c), we conclude that
$C$ is Cohen-Macaulay.
\end{proof}

The next result is a useful tool for reducing the study of Betti numbers of semidualizing modules over Cohen-Macaulay
local rings to the same study on Artinian local rings.

\begin{lemma}\label{214} Let $(R,\fm,k)$ be a $d$-dimensional Cohen-Macaulay local ring and $C$ a semidualizing
module for $R$. Let $\underline{x}=x_1, x_2, \dots, x_d\in \fm$ be an $R$-regular sequence and set $\overline{R}:
=R/\langle \underline{x}\rangle$ and $\overline{C}:=C/\langle \underline{x}\rangle C$. Then $\overline{C}$ is a
semidualizing module for the Artinian local ring $\overline{R}$ and $\beta^{\overline{R}}_{i}(\overline{C})=
\beta^R_{i}(C)$ for all integers $i\geq 0$. Furthermore, $C\cong R$ if and only if $\overline{C}\cong \overline{R}$.
\end{lemma}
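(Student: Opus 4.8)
The plan is to reduce all three assertions to the behaviour of $C$ modulo a single regular element, together with standard derived base-change isomorphisms; one may organize the whole argument as an induction on $d$, the base case $d=1$ carrying the essential content.

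First I would record that $\underline{x}$ is a $C$-regular sequence. The homothety isomorphism $R\simeq {\bf R}\Hom_R(C,C)$ forces $\Supp_RC=\Spec R$ and, upon taking associated primes of $\Hom_R(C,C)$, gives $\Ass_RC=\Ass R$; since $R$ is Cohen-Macaulay this yields $\depth_RC=\depth R=d$. As $\Ass_RC=\Ass R$ and each $x_i$ is $R$-regular, a straightforward induction on the length of $\underline{x}$ (using the one-variable case of the semidualizing reduction established below) shows that $\underline{x}$ is $C$-regular. In particular $C\otimes_R^{\bf L}\overline{R}\simeq \overline{C}$ in $\mathcal{D}(\overline{R})$, because $\overline{R}$ is resolved over $R$ by the Koszul complex on $\underline{x}$ and this complex is acyclic in positive degrees against $C$.

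Next, for the semidualizing property I would check that the homothety $\overline{R}\lo {\bf R}\Hom_{\overline{R}}(\overline{C},\overline{C})$ is an isomorphism, the finiteness condition $\overline{C}\in\mathcal{D}^f_{\Box}(\overline{R})$ being automatic. Using Hom-tensor adjunction along $R\to\overline{R}$ and the identification $\overline{C}\simeq C\otimes_R^{\bf L}\overline{R}$, one obtains
$${\bf R}\Hom_{\overline{R}}(\overline{C},\overline{C})\simeq {\bf R}\Hom_R(C,\overline{C})\simeq {\bf R}\Hom_R(C,\,C\otimes_R^{\bf L}\overline{R}).$$
Since $C\in\mathcal{D}^f_{\Box}(R)$ and $\overline{R}$ has finite flat dimension over $R$, the tensor-evaluation morphism of \cite[Appendix]{C2} makes the natural map ${\bf R}\Hom_R(C,C)\otimes_R^{\bf L}\overline{R}\to {\bf R}\Hom_R(C,\,C\otimes_R^{\bf L}\overline{R})$ an isomorphism; feeding in $R\simeq{\bf R}\Hom_R(C,C)$ collapses the source to $\overline{R}$. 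Tracking the homothety through these identifications shows that the resulting isomorphism $\overline{R}\simeq {\bf R}\Hom_{\overline{R}}(\overline{C},\overline{C})$ is exactly the homothety for $\overline{C}$, so $\overline{C}$ is semidualizing. For the equality of Betti numbers I would then invoke the standard change-of-rings isomorphism for Tor: if $x$ is a nonzerodivisor on both $R$ and $C$ and $x\in\fm$, then $\Tor^R_i(k,C)\cong \Tor^{R/xR}_i(k,C/xC)$ for every $i$. Iterating across $\underline{x}$ gives $\Tor^R_i(k,C)\cong \Tor^{\overline{R}}_i(k,\overline{C})$, and comparing $k$-dimensions yields $\beta^R_i(C)=\beta^{\overline{R}}_i(\overline{C})$ for all $i\geq 0$; equivalently this reads off from $k\otimes_{\overline{R}}^{\bf L}\overline{C}\simeq (k\otimes_{\overline{R}}^{\bf L}\overline{R})\otimes_R^{\bf L}C\simeq k\otimes_R^{\bf L}C$.

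Finally, the stated equivalence is immediate from this. If $C\cong R$ then $\overline{C}=C/\langle\underline{x}\rangle C\cong\overline{R}$ at once. Conversely, if $\overline{C}\cong\overline{R}$ then $\beta^{\overline{R}}_0(\overline{C})=1$ and $\beta^{\overline{R}}_i(\overline{C})=0$ for $i>0$, so by the Betti-number equality $\pd_RC=0$ and $\beta^R_0(C)=1$, whence $C$ is free of rank one and $C\cong R$. I expect the genuine obstacle to lie in the semidualizing step: one must be careful both that the tensor-evaluation and adjunction isomorphisms apply under the finite-flat-dimension hypothesis on $\overline{R}$, and that the naturality bookkeeping truly sends the homothety for $C$ to the homothety for $\overline{C}$. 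The Betti-number comparison and the rank-one conclusion are then routine.
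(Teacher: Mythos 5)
Your proposal is correct, and it differs from the paper's proof mainly in being self-contained where the paper argues by citation. The quantitative core, the Betti-number equality, is identical in both: the paper tensors a free resolution $F$ of $C$ with $\overline{R}$ (via \cite[Proposition 1.1.5]{BH}, valid because $\underline{x}$ is $C$-regular) and observes that $k\otimes_{\overline{R}}(\overline{R}\otimes_R F)\cong k\otimes_R F$, which is precisely your isomorphism $k\otimes_{\overline{R}}^{\bf L}\overline{C}\simeq k\otimes_R^{\bf L}C$. You diverge on the other two assertions: the paper quotes \cite[Theorem 2.2.6]{S} for the fact that $\overline{C}$ is semidualizing over $\overline{R}$, and \cite[Proposition 4.2.18]{S} for the equivalence $C\cong R\Leftrightarrow\overline{C}\cong\overline{R}$, whereas you prove the first by derived base change along $R\to\overline{R}$ (adjunction plus tensor-evaluation, applicable since $\pd_R\overline{R}=d<\infty$, then feeding in $R\simeq{\bf R}\Hom_R(C,C)$), and the second directly from the Betti equality: $\overline{C}\cong\overline{R}$ forces $\beta^R_0(C)=1$ and $\beta^R_i(C)=0$ for $i>0$, so the minimal free resolution of $C$ reduces to $R$ in degree zero and $C\cong R$. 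What each buys: the paper's proof is three lines; yours needs no input from \cite{S} at all, your base-change argument is essentially the standard proof of the cited descent result (provided the homothety-tracking is carried out, as you flag), and your converse is particularly economical in that it reuses exactly what the lemma has already established.

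One local repair is needed in your first paragraph: the inference ``$\Ass_RC=\Ass R$, and $R$ is Cohen-Macaulay, hence $\depth_RC=\depth R=d$'' is not valid as stated, since equality of associated primes does not control depth (over $R=k[[x,y]]$ the module $R\oplus\fm$ has $\Ass_R(R\oplus\fm)=\{0\}=\Ass R$ but depth $1$). The depth equality is true, but it comes from \cite[Corollary 3.2]{C1}, not from the $\Ass$ computation. Fortunately nothing in your proof uses it: what you actually need is that $\underline{x}$ is $C$-regular, and your induction --- $x_1$ avoids $\bigcup_{\fp\in\Ass R}\fp=\bigcup_{\fp\in\Ass_RC}\fp$, hence is $C$-regular; then $C/x_1C$ is semidualizing over $R/x_1R$ by the one-variable base change, and one iterates --- delivers exactly that.
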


\begin{proof} By \cite[Theorem 2.2.6]{S}, $\overline{C}$ is a semidualizing module for the Artinian local ring $\overline{R}$.
Let $F$ be a free resolution of the $R$-module $C$. By \cite[Proposition 1.1.5]{BH}, we conclude that $\overline{R}\otimes_RF$ is
a free resolution of the $\overline{R}$-module $\overline{C}$. Hence, for every non-negative integer $i$, we have the following
natural $R$-isomorphisms: $$\Tor^{\overline{R}}_i(k,\overline{C})\cong \text{H}_i(k\otimes_{\overline{R}}(\overline{R}\otimes_RF))
\cong \text{H}_i(k\otimes_RF)\cong \Tor^{R}_i(k,C).$$
Thus $\beta^{\overline{R}}_{i}(\overline{C})=\beta^R_{i}(C)$ for all $i\geq 0$. The remaining assertion is immediate by
\cite[Proposition 4.2.18]{S}.
\end{proof}

We apply the following obvious result in the proof of Theorem \ref{216}.

\begin{lemma}\label{215} Let $K$ be an $R$-module and $0\lo L\lo M\lo N\lo 0$ an short exact sequence of $R$-homomorphisms such that
$\Ext_R^i(M,K)=0=\Ext_R^i(N,K)$ for all $i>0$. Then the sequence $$0\lo \Hom_R(N,K)\lo \Hom_R(M,K)\lo \Hom_R(L,K)\lo 0$$
is exact and $\Ext_R^i(L,K)=0$ for all $i>0$.
\end{lemma}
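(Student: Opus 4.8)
The plan is to apply the contravariant functor $\Hom_R(-,K)$ to the short exact sequence $0\lo L\lo M\lo N\lo 0$ and to read off both conclusions from the resulting long exact sequence in cohomology. This produces the exact sequence
$$0\lo \Hom_R(N,K)\lo \Hom_R(M,K)\lo \Hom_R(L,K)\lo \Ext_R^1(N,K)\lo \cdots,$$
which continues, for each integer $i\geq 1$, through the segment
$$\Ext_R^i(N,K)\lo \Ext_R^i(M,K)\lo \Ext_R^i(L,K)\lo \Ext_R^{i+1}(N,K).$$

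First I would settle the exactness of the $\Hom$-row. By hypothesis $\Ext_R^1(N,K)=0$, so the connecting homomorphism $\Hom_R(L,K)\lo \Ext_R^1(N,K)$ is the zero map; hence the map $\Hom_R(M,K)\lo \Hom_R(L,K)$ is surjective, and the four-term sequence $0\lo \Hom_R(N,K)\lo \Hom_R(M,K)\lo \Hom_R(L,K)\lo 0$ is exact, which is the first assertion.

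Next, for the vanishing of the higher $\Ext$ modules of $L$, I would fix an integer $i>0$ and examine the displayed segment: the two flanking terms $\Ext_R^i(M,K)$ and $\Ext_R^{i+1}(N,K)$ both vanish by hypothesis (as $i>0$ and $i+1>0$), so exactness forces $\Ext_R^i(L,K)=0$. There is no genuine obstacle here; the statement is purely a bookkeeping consequence of the long exact sequence, and the only thing to confirm is that the assumed vanishing annihilates precisely the neighboring terms at each spot.
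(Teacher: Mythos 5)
Your proof is correct: applying the contravariant functor $\Hom_R(-,K)$ to the short exact sequence and reading off the long exact sequence $0\lo \Hom_R(N,K)\lo \Hom_R(M,K)\lo \Hom_R(L,K)\lo \Ext_R^1(N,K)\lo\cdots$, with the vanishing hypotheses killing the flanking terms $\Ext_R^i(M,K)$ and $\Ext_R^{i+1}(N,K)$ at each spot, is exactly the standard argument. The paper gives no proof at all---it introduces the lemma as an ``obvious result''---so your write-up simply supplies the routine bookkeeping the authors leave implicit.
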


The following result is also needed in the proof of Theorem \ref{216}. It is interesting in its own right.

\begin{lemma}\label{2151} Let $(R,\fm,k)$ be a local ring and $M$ a finitely generated $R$-module with a rank. If
the projective dimension of $M$ is infinite, then $\beta^R_{n}(M)\geq 2$ for all $n\in \mathbb{N}_0$.
\end{lemma}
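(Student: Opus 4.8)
The plan is to read the inequality off the behaviour of the syzygies of $M$ in its minimal free resolution, using two inputs: every syzygy is a submodule of a free module (hence torsion-free), and $M$ has a rank. Let $F=\cdots\to F_n\to F_{n-1}\to\cdots\to F_0\to 0$ be the minimal free resolution of $M$, write $\Omega^0:=M$ and let $\Omega^{n}$ be the $n$-th syzygy, so that $\beta^R_n(M)=\rank_R F_n$ and there are short exact sequences $0\to\Omega^{n+1}\to F_n\to\Omega^{n}\to 0$ for every $n\ge 0$. Since $\pd_R M=\infty$, no syzygy $\Omega^{n}$ vanishes; in particular $F_n\neq 0$, so $\beta^R_n(M)\ge 1$ for all $n$, and the task is to upgrade this to $\ge 2$.

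First I would establish that every syzygy has a well-defined rank and that rank is additive along the displayed sequences. Because $M$ has a rank $r$, localizing $F$ at any $\fp\in\Ass R$ turns it into a free resolution of the free module $M_{\fp}\cong R_{\fp}^{\,r}$; a resolution of a projective module is split exact, so each $(\Omega^{n})_{\fp}$ is a free $R_{\fp}$-module of rank equal to the alternating sum $\beta^R_{n-1}(M)-\beta^R_{n-2}(M)+\cdots+(-1)^{n}r$. This value does not depend on the chosen associated prime, so $\Omega^{n}$ has a rank, and passing to the exact sequences gives $\beta^R_n(M)=\rank_R\Omega^{n}+\rank_R\Omega^{n+1}$ for all $n\ge 0$.

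It then remains to see that each syzygy contributes at least one to the rank. For $n\ge 1$ the module $\Omega^{n}$ is a nonzero submodule of the free module $F_{n-1}$, hence torsion-free, so $\Ass_R\Omega^{n}\subseteq\Ass R$ while $\Ass_R\Omega^{n}\neq\emptyset$; thus $(\Omega^{n})_{\fp}\neq 0$ for some $\fp\in\Ass R$ and therefore $\rank_R\Omega^{n}\ge 1$. Together with $\rank_R\Omega^0=\rank_R M=r\ge 1$, the additivity formula yields $\beta^R_n(M)=\rank_R\Omega^{n}+\rank_R\Omega^{n+1}\ge 2$ for every $n\in\mathbb{N}_0$.

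The step I expect to be the crux is precisely this positivity of the syzygy ranks, that is, ruling out that a nonzero syzygy becomes torsion after passing to the total quotient ring; this is exactly where torsion-freeness of submodules of free modules, combined with the hypothesis that $M$ (and hence each $\Omega^{n}$) genuinely has a rank, is used, while the infinitude of $\pd_R M$ guarantees the syzygies stay nonzero. The only delicate term is the base case $n=0$, which requires $\rank_R M\ge 1$; this causes no trouble in the intended applications, where $M$ is a semidualizing module and hence of rank one.
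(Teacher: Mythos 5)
Your argument is correct and is essentially the paper's own proof: both pass to the minimal free resolution, show the syzygies $\Omega^nM$ have ranks via additivity of rank along the syzygy short exact sequences (the paper cites \cite[Proposition 1.4.5]{BH} where you rederive it by localizing at the associated primes), and use that the nonzero torsion-free syzygies have positive rank to obtain $\beta^R_n(M)=\rank_R(\Omega^{n}M)+\rank_R(\Omega^{n+1}M)\geq 2$, the paper merely phrasing this as a contradiction starting from $\beta^R_n(M)=1$. The $n=0$ caveat you flag --- that one needs $\rank_RM\geq 1$, which ``has a rank'' alone does not formally guarantee --- is equally present (and silently glossed over) in the paper's proof, and as you note it is harmless in the intended applications, where $M$ is semidualizing and hence of positive rank.
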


\begin{proof} On the contrary, assume that $\beta^R_{n}(M)<2$ for some $n\in \mathbb{N}_0$. Then $\beta^R_{n}(M)=1$, because
$\pd_RM=\infty$. For each non-negative integer $i$, set $\beta_i:=\beta_i^R(M)$. Hence, there is an exact sequence $$F=\cdots
\lo R^{\beta_i}\overset{\partial_i}\lo R^{\beta_{i-1}}\lo \cdots \overset{\partial_{1}}\lo R^{\beta_0}\overset{\partial_{0}}
\lo M\lo 0.$$ For each natural integer $i$, set $\Omega^iM:=\ker \partial_{i-1}$. Splitting $F$ into short exact sequences and
applying \cite[Proposition 1.4.5]{BH}, successively, to them, yields that each $\Omega^iM$ has a rank. Since $\Omega^iM$ is
torsion-free and non-zero, it follows that $\rank_R (\Omega^{i}M)>0$ for all $i\in \mathbb{N}$. By applying \cite[Proposition
1.4.5]{BH} to the short exact sequence $$0\lo \Omega^{n+1}M \lo R\lo \Omega^nM \lo 0,$$ it turns out that $$\rank_R (\Omega^{n+1}M)+
\rank_R (\Omega^{n}M)=1.$$ Hence, either $\rank_R (\Omega^{n}M)=0$ or $\rank_R (\Omega^{n+1}M)=0$, and we arrive at the desired
contradiction.
\end{proof}

Finally, we are in a position to prove our third (and last) theorem.

\begin{theorem}\label{216} Let $(R,\fm,k)$ be a local ring and $C$ a semidualizing complex for $R$ such that $\beta^R_{n}(C)=1$
for some integer $n$. Assume that either $R$ is Cohen-Macaulay or $C$ is a module with a rank. Then $C\simeq \Sigma^nR$ and $n=\sup C$.
\end{theorem}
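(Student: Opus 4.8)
The plan is to reduce, in both cases, to the statement $\pd_RC<\infty$: once this holds, \cite[Proposition 8.3]{C1} shows that $C$ is isomorphic to a shift of $R$, and since a shift $\Sigma^iR$ has $\beta^R_i(\Sigma^iR)=1$ with all other Betti numbers zero, the hypothesis $\beta^R_n(C)=1$ forces that shift to be $\Sigma^nR$ and $n=\sup C$. So the whole content is the finiteness of the projective dimension.

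For the case that $C$ is a module with a rank this is immediate: if $\pd_RC=\infty$, then Lemma \ref{2151} gives $\beta^R_m(C)\geq 2$ for every $m\in\mathbb N_0$, contradicting $\beta^R_n(C)=1$. Hence $\pd_RC<\infty$, so $C\cong R$ (a module concentrated in degree $0$), and $\beta^R_n(C)=1$ gives $n=0=\sup C$.

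For the case that $R$ is Cohen--Macaulay I would first reduce $C$ to a module. From $\depth_RC=\depth R-\inf C$ (that is \cite[Corollary 3.2]{C1}) and the general bound $\dim_RC\leq\dim R-\inf C$, which follows from $\dim_RC=\sup\{\dim R/\fp-\inf C_{\fp}\mid\fp\in\Spec R\}$ together with $\inf C_{\fp}\geq\inf C$, the Cohen--Macaulayness of $R$ yields $\cmd_RC=\dim_RC-\depth_RC\leq 0$, so $C$ is a Cohen--Macaulay complex. Passing to $\widehat R$ (which preserves amplitude and carries a normalized dualizing complex), the dagger dual $C^{\dag}$ is again semidualizing, hence Cohen--Macaulay by the same inequality, so Remark \ref{24}(c) applied to $C^{\dag}$ gives $\amp C=\amp (C^{\dag})^{\dag}=0$. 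Writing $s:=\sup C=\inf C$ and $M:=\operatorname{H}_s(C)$, we get $C\simeq\Sigma^sM$ with $M$ a semidualizing module and $\beta^R_{n-s}(M)=\beta^R_n(C)=1$, where $n-s\geq 0$. Choosing a maximal $R$-regular sequence $\underline x$ and setting $\overline R:=R/\langle\underline x\rangle$, Lemma \ref{214} produces a semidualizing module $\overline M$ over the Artinian ring $\overline R$ with $\beta^{\overline R}_{n-s}(\overline M)=\beta^R_{n-s}(M)=1$ and with $M\cong R$ if and only if $\overline M\cong\overline R$. Thus everything comes down to the following crux, which I expect to be the main obstacle: a semidualizing module over an Artinian local ring having some Betti number equal to $1$ is free.

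To attack this crux I would argue by contradiction, assuming $N:=\overline M$ is not free, so that $\pd_{\overline R}N=\infty$ and every $\beta_i(N)\geq 1$. Since $\operatorname{End}_{\overline R}(N)\cong\overline R$ is local, $N$ is indecomposable, hence its minimal free resolution $F$ is indecomposable in $\mathcal D(\overline R)$; by Lemma \ref{211}(b) no differential $\partial^F_i$ with $i\geq 1$ can vanish. Let $j$ be a spot with $\beta_j(N)=1$ (the case $j=0$ is immediate, as a cyclic semidualizing module is $\cong\overline R$), so that $F_j=\overline R$; writing $\partial^F_j$ as a column and $\partial^F_{j+1}$ as a row with entries in $\fm$, let $\fb$ and $\fc$ be the ideals they generate. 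Then $\fb,\fc\neq 0$ and $\fb\fc=0$, and exactness of $F$ gives $\fc=(0:_{\overline R}\fb)$. On the other hand, inducting with Lemma \ref{215} from $\Ext^{>0}_{\overline R}(N,N)=0$ shows $\Ext^{>0}_{\overline R}(-,N)$ vanishes on all syzygies of $N$, so $\Hom_{\overline R}(F,N)$ is the exact coresolution $0\to\overline R\to N^{\beta_0}\to N^{\beta_1}\to\cdots$, and exactness at its $j$th spot yields $\fb N=(0:_N\fc)$. The plan is then to combine these annihilator relations with Matlis self-duality (so $\ell(N)=\ell(N^{\vee})$) and the semidualizing identity $N\otimes^{\mathbf L}_{\overline R}N^{\vee}\simeq\operatorname{E}_{\overline R}(k)$ to force the resolution to terminate, contradicting $\partial^F_j\neq 0$; this gives $\pd_{\overline R}N<\infty$, i.e.\ $N$ free. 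Reversing the reductions ($\overline M\cong\overline R$, hence $M\cong R$, hence $n-s=0$) then delivers $C\simeq\Sigma^nR$ with $n=\sup C$. The delicate point is precisely this last Artinian step: turning the annihilator bookkeeping into a genuine (strict) length inequality, valid exactly when the resolution fails to stop, is where the argument must work hardest.
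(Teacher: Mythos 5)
Your two reductions are sound and essentially the paper's own: the module-with-a-rank case via Lemma \ref{2151} and \cite[Corollary 2.2.8]{S} is exactly the paper's argument, and in the Cohen--Macaulay case your derivation of $\amp C=0$ (Cohen--Macaulayness of $C$ and of $C^{\dag}$ plus Remark \ref{24}(c)) is a correct, slightly longer substitute for the paper's one-line appeal to \cite[Corollary 3.4(a)]{C1}; the passage to an Artinian ring via Lemma \ref{214} is identical. The genuine gap is the step you yourself flag: the claim that a semidualizing module $N$ over an Artinian local ring with some Betti number equal to $1$ must be free is the entire content of the theorem in the Cohen--Macaulay case, and you do not prove it. The relations you extract at the spot $j$ --- namely $\fb\fc=0$, $\fc=(0:_{\overline{R}}\fb)$ and $\fb N=(0:_N\fc)$ --- are correct, but they are local data at a single homological degree, and no contradiction is actually derived from them; the proposed combination with Matlis duality and $N\otimes_{\overline{R}}^{\mathbf{L}}N^{\vee}\simeq \operatorname{E}(k)$ is left as a hope. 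Note how strong the claim is: applied to $N=\operatorname{E}(k)$ (which is semidualizing over an Artinian ring, and free only when the ring is Gorenstein), it says that every non-Gorenstein Artinian local ring has all Bass numbers at least $2$, a Foxby--Roberts-type theorem; it is therefore implausible that bookkeeping at one spot of the resolution will suffice.

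For comparison, the paper closes this step with a global length count over the whole truncated resolution. It splits the minimal free resolution into the short exact sequences $(0),\dots,(n)$, applies $\Hom_R(-,C)$ to all of them (exactness is preserved by Lemma \ref{215}, starting from $\Ext_R^{>0}(C,C)=0$), and shows by a Bass-class argument (\cite[Propositions 3.1.6, 3.1.7(b) and Corollary 4.1.7]{S}) that $(\Omega^iC)^*\neq 0$ for all $i$ --- a step your sketch has no analogue of. Writing $h=\beta_0-\beta_1+\cdots+(-1)^n\beta_n$, it then obtains the two symmetric identities $\ell_R(C)=h\ell_R(R)+(-1)^{n+1}\ell_R(\Omega^{n+1}C)$ and $\ell_R(R)=h\ell_R(C)+(-1)^{n+1}\ell_R((\Omega^{n+1}C)^*)$, together with the strict bounds $\ell_R(\Omega^{n+1}C)<\ell_R(R)$ (here $\beta_n=1$ enters: $\Omega^{n+1}C\subseteq\fm F_n=\fm$) and $\ell_R((\Omega^{n+1}C)^*)<\ell_R(C)$ (here the non-vanishing of $(\Omega^nC)^*$ enters). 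A case analysis on the parity of $n$ and the size of $h$ then yields a contradiction in every case. Some such global comparison of the resolution with its $\Hom_R(-,C)$-dual --- not annihilator relations at one spot --- is what has to replace your final paragraph.
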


\begin{proof} First, we assume that $R$ is Cohen-Macaulay. By \cite[Corollary 3.4(a)]{C1}, we have $\amp C\leq \cmd R$, and so we
may and do assume that $C$ is a module. Then, it would be enough to show that $C\cong R$. On the contrary, suppose that $C\ncong R$.
By Lemma \ref{214}, we may and do assume that $R$ is Artinian. By \cite[Corollaries 2.1.14 and 2.2.8]{S}, we know that if $C$ is
cyclic or $\pd_RC<\infty$, then $C\cong R$. So, we may and do assume that $n\geq 1$ and $\pd_RC=\infty$, and look for a contradiction. For
each non-negative integer $i$, set $\beta_i:=\beta_i^R(C)$. Hence, the minimal free resolution of $C$ has the form
$$\cdots \lo R^{\beta_{n+1}}\lo R\lo R^{\beta_{n-1}}\lo \cdots \lo R^{\beta_{1}}\lo R^{\beta_{0}}\lo 0.$$ Then, we have
the following short exact sequences:
$$(0) \ \  \  \  \  \  \ \    \  \  \ \   \  0\lo \Omega^1C\lo R^{\beta_{0}}\lo C\lo 0$$
$$(1)  \ \  \ \  \   \  \ \ \  \  \  \   \  \  0\lo \Omega^2C\lo R^{\beta_{1}}\lo \Omega^1C\lo 0$$
$$ \  \ \  \   \  \ \ \   \      \   \  \  \  \     \vdots$$
$$(n-1)  \  \   \ \  \  \  \ \   \   \  \  \   \  0\lo \Omega^nC\lo R^{\beta_{n-1}}\lo \Omega^{n-1}C\lo 0$$
$$(n)  \   \  \  \  \  \ \  \  \  \  \  \  \  0\lo \Omega^{n+1}C\lo R\lo \Omega^nC\lo 0.$$
We successively apply the contravariant functor $(-)^*:=\Hom_R(-,C)$ to the short exact sequences $(0), (1), \dots, (n)$ and use
Lemma \ref{215} to get the following short exact sequences:
$$(0)^* \ \  \  \  \  \  \ \    \  \  \ \   \  0\lo R\lo C^{\beta_{0}}\lo (\Omega^1C)^*\lo 0$$
$$(1)^*  \ \  \ \  \   \  \ \ \  \  \  \   \  \  0\lo (\Omega^1C)^*\lo C^{\beta_{1}}\lo (\Omega^2C)^*\lo 0$$
$$ \  \ \  \   \  \ \ \   \      \   \  \  \  \     \vdots$$
$$(n-1)^*  \  \   \ \  \  \  \ \   \   \  \  \   \  0\lo (\Omega^{n-1}C)^*\lo C^{\beta_{n-1}}\lo (\Omega^{n}C)^*\lo 0$$
$$(n)^*  \   \  \  \  \  \ \  \  \  \  \  \  \  0\lo (\Omega^{n}C)^*\lo C\lo (\Omega^{n+1}C)^*\lo 0.$$

As $\pd_RC=\infty$, it follows that $\Omega^iC\neq 0$ for all integers $i\geq 0$. Suppose that $(\Omega^jC)^*=0$ for some
integer $j\geq 0$. Then, by \cite[Proposition 3.1.6]{S}, $(\Omega^{j-1}C)^*\cong C^{\beta_{j-1}}\in \mathcal{B}_C(R)$.
Applying \cite[Proposition 3.1.7(b)]{S}, successively, to the short exact sequences $(j-2)^*, \dots, (1)^*, (0)^*$ yields
that $R\in \mathcal{B}_C(R)$. Now, \cite[Corollary 4.1.7]{S} implies that $C\cong R$. Thus $(\Omega^iC)^*\neq 0$ for all
integers $i\geq 0$.

Set $h:=\beta_{0}-\beta_{1}+\cdots +(-1)^n\beta_{n}$. Using the short exact sequences $(0), (1), \dots, (n)$, we deduce that
$$(A_1): \  \  \  \  \ \ell_R(C)=h\ell_R(R)+(-1)^{n+1}\ell_R(\Omega^{n+1}C)$$ and $$(B_1): \ \ \ \   \  \ \  \  \  \ell_R(\Omega^{n+1}C)<\ell_R(R).$$
Similarly, by using the short exact sequences $(0)^*, (1)^*, \dots, (n)^*$, we may see that
$$(A_2): \  \  \  \  \ \ell_R(R)=h\ell_R(C)+(-1)^{n+1}\ell_R((\Omega^{n+1}C)^*)$$ and $$(B_2):  \  \ \ \   \ \   \  \  \ell_R((\Omega^{n+1}C)^*)<\ell_R(C).$$

Assume that $n$ is even. If $h\leq 0$, then $A_1$ yields that $\ell_R(C)<0$. If $h=1$, then $A_1$ and $A_2$, respectively,
imply that $\ell_R(C)<\ell_R(R)$ and  $\ell_R(R)<\ell_R(C)$. If $h\geq 2$, then $A_1$ and $B_1$ imply that $\ell_R(C)>
\ell_R(R)$. On the other hand,  $A_2$ and $B_2$ imply that $\ell_R(R)>\ell_R(C)$.

Next, assume that $n$ is odd. If $h<0$, then, by $A_1$ and $B_1$, we deduce that $\ell_R(C)<0$. If $h=0$, then $A_1$
and $B_1$ imply that $\ell_R(C)<\ell_R(R)$ and $A_2$ and $B_2$ imply that $\ell_R(R)<\ell_R(C)$. If $h\geq 1$, then $A_1$
and $A_2$, respectively, imply that $\ell_R(C)>\ell_R(R)$ and $\ell_R(R)>\ell_R(C)$.

Therefore if either $n$ is even or odd, we arrive at a contradiction.

Now, assume that $C$ is a module with a rank. Then, Lemma \ref{2151} implies that $C$ has finite projective dimension, and
so $C\cong R$ by \cite[Corollary 2.2.8]{S}.
\end{proof}

The next result extends \cite[Characterization 1.6]{CSV}.

\begin{corollary}\label{217} Let $(R,\fm,k)$ be a local ring and $C$ a semidualizing complex for $R$ such that $\mu_R^{n}(C)=1$
for some integer $n$. Assume that either
\begin{enumerate}
\item[(a)] $R$ is Cohen-Macaulay, or
\item[(b)]$R$ is analytically irreducible and $C$ is Cohen-Macaulay.
\end{enumerate}
Then $C$ is dualizing and $n=\dim_RC$.
\end{corollary}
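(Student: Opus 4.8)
My plan is to route everything through the dagger duality of Remark \ref{24}: this turns the hypothesis $\mu^n_R(C)=1$ into a Betti-number condition on the dual complex, to which Theorem \ref{216} applies directly. Because the dagger functor needs a normalized dualizing complex, I would first reduce to the complete case. Setting $\widehat{C}:=C\otimes_R\widehat{R}$, the complex $\widehat{C}$ is semidualizing for $\widehat{R}$ by \cite[Lemma 2.6]{C1}, and faithful flatness of completion together with ${\bf R}\Hom_R(k,C)\otimes_R\widehat{R}\simeq {\bf R}\Hom_{\widehat{R}}(k,\widehat{C})$ gives $\mu^n_{\widehat{R}}(\widehat{C})=\mu^n_R(C)=1$. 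In case (a) the ring $\widehat{R}$ is still Cohen-Macaulay; in case (b) the assumption that $R$ is analytically irreducible says precisely that $\widehat{R}$ is a domain, and $\widehat{C}$ stays Cohen-Macaulay since depth and dimension of a complex are unchanged by completion. As in the proof of Corollary \ref{213} one has $\id_RC=\id_{\widehat{R}}\widehat{C}$ and $\dim_RC=\dim_{\widehat{R}}\widehat{C}$, so it suffices to prove the statement for $\widehat{C}$ over $\widehat{R}$. Thus I may assume $R$ is complete, and I fix a normalized dualizing complex $D$ with duality functor $(-)^{\dag}={\bf R}\Hom_R(-,D)$.

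Next I would put $L:=C^{\dag}$. By Remark \ref{24}(f) it is semidualizing, and by Remark \ref{24}(d) its $n$th Betti number is $\beta^R_n(L)=\mu^n_R(C)=1$. I then want to feed $L$ into Theorem \ref{216}, so I must check one of that theorem's two alternative hypotheses for $L$. In case (a) this is free, as $R$ is Cohen-Macaulay. In case (b), the Cohen-Macaulayness of $C$ forces $\amp L=\amp C^{\dag}=0$ by Remark \ref{24}(c), so $L$ is quasi-isomorphic to a finitely generated module; since $R$ is now a domain, this module has a rank. Either way Theorem \ref{216} applies to $L$ and yields $L\simeq \Sigma^nR$ and $n=\sup L$.

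Finally I would dualize back. Using Remark \ref{24}(a),
$$C\simeq (C^{\dag})^{\dag}=L^{\dag}\simeq (\Sigma^nR)^{\dag}=\Sigma^{-n}R^{\dag}=\Sigma^{-n}D,$$
and since $D$ is dualizing and any shift of a dualizing complex is dualizing, $C$ is dualizing. For the numerical claim, Remark \ref{24}(b) gives $\dim_RC=\sup C^{\dag}=\sup L=n$.

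I expect the real content to be supplied entirely by Theorem \ref{216}; the only delicate point is the completion bookkeeping, where I must make sure the Bass number $\mu^n$, the relevant Cohen-Macaulay property, injective dimension and dimension all survive completion, and observe that analytic irreducibility is exactly the input that makes $\widehat{R}$ a domain, so that the dual module $L$ in case (b) has a rank.
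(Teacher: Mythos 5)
Your proposal is correct and takes essentially the same route as the paper's own proof: reduce to the complete case, set $L:=C^{\dag}$ so that parts (d) and (f) of Remark \ref{24} turn the hypothesis into $\beta^R_n(L)=1$, apply Theorem \ref{216} (in case (b) via $\amp L=0$ and the observation that $\text{H}_{\dim_RC}(L)$ is a semidualizing module with a rank over the domain $\widehat{R}$), and dualize back to obtain $C\simeq \Sigma^{-n}D$ with $n=\dim_RC$. The only difference is presentational: you spell out the completion bookkeeping (preservation of Bass numbers, injective dimension, dimension, and the Cohen--Macaulay/domain hypotheses), which the paper leaves implicit in this proof, having recorded the same reduction in the proof of Corollary \ref{213}.
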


\begin{proof} We may and do assume that $R$ is complete. Then $R$ possesses a normalized dualizing complex $D$. By parts (f) and
(d) of Remark \ref{24}, it turns out that $L:=C^{\dag}$ is also a semidualizing complex for $R$ and $\beta^R_n(L)=1$. Set $d:=
\dim_RC$. Then,  by Remark \ref{24}(b), $d=\sup L$.

(a) Assume that $R$ is Cohen-Macaulay. Then Theorem \ref{216} implies that $L\simeq \Sigma^{n}R$ and $n=d$. Thus, by Remark
\ref{24}(a), we deduce that $C\simeq \Sigma^{-d}D$.

(b)  Assume that $R$ is analytically irreducible and $C$ is Cohen-Macaulay. As $C$ is Cohen-Macaulay, Remark \ref{24}(c) implies
that $\amp L=0$. Since $R$ is a domain, it follows that $\text{H}_{d}(L)$ is a semidualizing module with a rank, and so Theorem
\ref{216} implies that $$L\simeq \Sigma^{d}\text{H}_{d}(L)\simeq \Sigma^{d}R.$$ Using this, as in the proof (a), we can obtain
$C\simeq \Sigma^{-d}D$.
\end{proof}


\end{document}